\theoremstyle{plain}
	\newtheorem{thm}{\protect\theoremname}[section]
  \theoremstyle{plain}
  \newtheorem{cor}[thm]{\protect\corollaryname}
  \newtheorem*{cor*}{\protect\corollaryname}
  \newtheorem{lem}[thm]{\protect\lemmaname}
  \newtheorem*{lem*}{\protect\lemmaname}
	\newtheorem{prop}[thm]{\protect\propositionname}
	\newtheorem*{prop*}{\protect\propositionname}
  \theoremstyle{remark}
  \newtheorem{rem}{\protect\remarkname}
\DeclareMathOperator{\tr}{Tr}
\DeclareMathOperator{\iid}{Id}
\DeclareMathOperator{\MD}{MD}
\DeclareMathOperator{\Proj}{Proj}
\let\orgdescriptionlabel\descriptionlabel
\renewcommand*{\descriptionlabel}[1]{%
  \let\orglabel\label
  \let\label\@gobble
  \phantomsection
  \edef\@currentlabel{#1}%
  \let\label\orglabel
  \orgdescriptionlabel{#1}%
}
	\providecommand{\corollaryname}{Corollary}
  \providecommand{\lemmaname}{Lemma}
  \providecommand{\remarkname}{Remark}
	\providecommand{\theoremname}{Theorem}
\providecommand{\propositionname}{Proposition}
\begin{document}

\title{Regularity of solutions to fully nonlinear elliptic and parabolic free boundary problems}
\date{}

\author{E. Indrei and A. Minne}

\def\signei{\bigskip\begin{center} {\sc Emanuel Indrei\par\vspace{3mm}Center for Nonlinear Analysis\\  
Carnegie Mellon University\\
Pittsburgh, PA 15213, USA\\
email:} {\tt eindrei@msri.org }
\end{center}}

\def\signam{\bigskip\begin{center} {\sc Andreas Minne \par\vspace{3mm}
Department of Mathematics\\
KTH, Royal Institute of Technology\\
100 44 Stockholm, Sweden\\
email:} {\tt minne@kth.se}
\end{center}}

\maketitle

\begin{abstract}
We consider fully nonlinear obstacle-type problems of the form 
\begin{equation*}
\begin{cases}
F(D^{2}u,x)=f(x) & \text{a.e. in }B_{1}\cap\Omega,\\
|D^{2}u|\le K & \text{a.e. in }B_{1}\backslash\Omega,
\end{cases}
\end{equation*}
where $\Omega$ is an unknown open set and $K>0$. In particular, structural conditions on $F$ are presented which ensure that $W^{2,n}(B_1)$ solutions achieve the optimal $C^{1,1}(B_{1/2})$ regularity when $f$ is H\"older continuous. Moreover, if $f$ is positive on $\overline B_1$, Lipschitz continuous, and $\{u\neq 0\} \subset \Omega$, then we obtain local $C^1$ regularity of the free boundary under a uniform thickness assumption on $\{u=0\}$. Lastly, we extend these results to the parabolic setting.    
\end{abstract}

\makeatletter
\def\blfootnote{\gdef\@thefnmark{}\@footnotetext}
\makeatother

\blfootnote{E. Indrei acknowledges support from the Australian Research Council, US NSF Grant DMS-0932078 administered by the Mathematical Sciences Research Institute in Berkeley, CA, and US NSF PIRE Grant OISE-0967140 administered by the Center for Nonlinear Analysis at Carnegie Mellon University.}

\section{Introduction}

Obstacle-type problems appear in several mathematical disciplines such as minimal surface theory, potential theory, mean field theory of superconducting vortices, optimal control, fluid filtration in porous media, elasto-plasticity, and financial mathematics \cite{Crev, C1, CS, Rod, MR0440187}. The classical obstacle problem involves minimizing the Dirichlet energy on a given domain in the space of square integrable functions with square integrable gradient constrained to remain above a fixed obstacle function and with prescribed boundary data. Due to the structure of the Dirichlet integral, this minimization process leads to the free boundary problem 

\begin{equation*} \label{lap0}
\Delta u = f \chi_{\{u > 0\}} \hskip .1in \text{in} \hskip .1in B_1,
\end{equation*}
where $B_1 \subset \mathbb{R}^n$ is the unit ball centered at the origin. A simple one-dimensional example shows that even if $f \in C^\infty$, $u$ is not more regular than $C^{1,1}$. If the right-hand side is Lipschitz continuous, then the Harnack inequality may be used to show that $u$ achieves this optimal regularity. 

An obstacle-type problem is a free boundary problem of the form 
\begin{equation} \label{lap}
\Delta u = f \chi_\Omega \hskip .1in \text{in} \hskip .1in B_1,
\end{equation}
where $\Omega$ is an (unknown) open set. If $\Omega = \{u \neq 0\}$ and $f$ is Lipschitz continuous, monotonicity formulas may be used to prove $C^{1,1}$ regularity of $u$. Nevertheless, this method fails when $f$ is H\"older continuous. Recently, a harmonic analysis technique was developed in \cite{ALS} to prove optimal regularity under the weakest possible assumption on $f$: if $f$ is Dini-continuous, then $u$ is uniformly $C^{1,1}$ in $B_{1/2}$, where the bound on the Hessian depends on $\|u\|_{L^\infty(B_{1})}$.

Fully nonlinear analogs of \eqref{lap} have been considered by several researchers. The case $$F(D^2 u) = f \chi_\Omega \hskip .1in \text{in} \hskip .1in B_1$$  has been studied in \cite{L} for $\Omega = \{u > 0\}$ and in \cite{LS} when $\Omega = \{u \neq 0\}$. Moreover, a fully nonlinear version of the method in \cite{ALS} was developed in \cite{2012arXiv1212.5809F} and applied to                   
\begin{equation*}
\begin{cases}
F(D^{2}u)=1 & \text{a.e. in }B_{1}\cap\Omega,\\
|D^{2}u|\le K & \text{a.e. in }B_{1}\backslash\Omega,
\end{cases}
\end{equation*}
where $\Omega$ is an open set, $K>0$, and $u \in W^{2,n}(B_1)$. The  idea is to replace the projection on second-order harmonic polynomials carried out in \cite{ALS} with a projection involving the BMO estimates in \cite{MR1978880}. Subject to certain structural conditions on $F$, this tool is employed to prove that $u \in C^{1,1}$ in $B_{1/2}$ and, under a standard thickness assumption, that the free boundary is locally $C^1$. Moreover, the general structure of the equation enables the authors to recover previous regularity results (e.g. when $\Omega=\{u\neq 0\})$ and address nonlinear parabolic free boundary problems in the case when the elliptic operator does not depend explicitly on the spatial variable \cite{2013arXiv1309.0782F}.  

Our main result is Theorem \ref{c11} and establishes optimal regularity for the free boundary problem 

\begin{equation}
\begin{cases}
F(D^{2}u,x)=f(x) & \text{a.e. in }B_{1}\cap\Omega,\\
|D^{2}u|\le K & \text{a.e. in }B_{1}\backslash\Omega,
\end{cases}\label{eq:main}
\end{equation}
where $\Omega$ is an open set, $K>0$, $f$ is H\"older continuous, and under certain structural conditions on $F$ (see \S \ref{setup}). As a direct consequence, we obtain optimal regularity for general operators $F(D^2u, Du, u, x)$ and thereby address a problem discussed by Figalli and Shahgholian \cite[Remark 1.1]{2012arXiv1212.5809F}, see Corollary \ref{genops}. Free boundary problems of this type appear in the mean field theory of superconducting vortices \cite[Introduction]{CS}. 

The underlying principle in the proof is to locally apply Caffarelli's elliptic regularity theory \cite{MR1005611} to rescaled variants of \eqref{eq:main} in order to obtain a bound on $D^2u$. The main difficulty lies in verifying an average $L^n$ decay of the right-hand side in question. However, one may exploit that $u \in C^{1,\alpha}(B_1)$, $D^2u$ is bounded in $B_{1}\backslash\Omega$, and the BMO estimates in \cite{MR1978880} to prove that locally around a free boundary point, the coincidence set $B_{1}\backslash\Omega$ decays fast enough to ensure the $L^n$ decay. Our assumptions on the structure of $F$ involve conditions which enable us to utilize standard tools such as the maximum principle and Evans-Krylov theorem.

Moreover, once we establish that $u \in C^{1,1}$ in $B_{1/2}$, the corresponding regularity theory for the free boundary follows in a standard way through the classification of blow-up solutions and is carried out in \S \ref{frebd}. Indeed, non-degeneracy holds if $f$ is positive on $\overline B_1$ and $\{|\nabla u| \neq 0\} \subset \Omega$. Moreover, blow-up solutions around thick free boundary points  are half-space solutions, and this fact combines with a directional monotonicity result to yield $C^{1}$ regularity of the free boundary, see Theorem \ref{fbreg} for a precise statement.

Finally, we generalize the above-mentioned results to the parabolic setting in \S \ref{para} by considering the free boundary problem 
\begin{equation*}
\begin{cases}
\mathcal{H}(u(X),X)=f(X) & \text{a.e. in }Q_{1}\cap\Omega,\\
|D^{2}u|\le K & \text{a.e. in }Q_{1}\backslash\Omega,
\end{cases}
\end{equation*}
where $X=(x,t) \in \mathbb{R}^n \times \mathbb{R}$, $\mathcal{H}(u(X),X):=F(D^{2}u(X),X)-\partial_{t}u(X)$, $Q_1$ is the parabolic cylinder $B_{1}(0)\times(-1,0)$, $\Omega\subset Q_{1}$
is some unknown set, and $K>0$. 

\subsubsection*{Acknowledgements} 

\noindent We thank Alessio Figalli, Henrik Shahgholian, and John Andersson for enlightening discussions on obstacle-type free boundary problems. Moreover, we wish to thank Henrik Shahgholian and John Andersson for their valuable remarks on a preliminary version of this paper.   

\subsection{Setup} \label{setup}

In what follows, we record the structural conditions on the operator $F$ that will be employed throughout this paper. The first three conditions are well known in the study of free boundary problems and provide tools such as the maximum principle and Evans-Krylov theorem. The last condition, which we denote by \ref{eq:H4}, is the new ingredient which controls the oscillation of the operator in the spatial variable and enables the application of Caffarelli's regularity theory in our general framework, see Remarks \ref{r1} \& \ref{r2}. Moreover, we note that throughout the paper the constants of proportionality in our estimates may change from line to line while still being denoted by the same symbol $C$.  
    
\begin{description}
\item [{(H1)\label{hyp:0@0}}] $F(0,x)=0$ for all $x\in\Omega$.
\item [{(H2)\label{hyp:uniformellipticity}}] The operator $F$ is uniformly
elliptic with ellipticity constants $\lambda_{0}$, $\lambda_{1}>0$
such that
\[
\mathcal{P}^{-}(M-N)\le F(M,x)-F(N,x)\le\mathcal{P}^{+}(M-N)\qquad\forall x\in\Omega,
\]
where $M$ and $N$ are symmetric matrices and $\mathcal{P}^{\pm}$
are the Pucci operators
\[
\mathcal{P}^{-}(M):=\inf_{\lambda_{0}\iid\le N\le\lambda_{1}\iid}\tr NM,\qquad\mathcal{P}^{+}(M):=\sup_{\lambda_{0}\iid\le N\le\lambda_{1}\iid}\tr NM.
\]

\item [{(H3)\label{hyp:concavity}}] $F(M,x)$ will be assumed to be concave
or convex in $M$ for all $x$ in $\Omega$.
\end{description}

\begin{description}
\item [{(H4)\label{eq:H4}}] 
\begin{equation*} 
|F(M,x)-F(M,y)|\le C|M||x-y|^\alpha,
\end{equation*}
for some $\alpha \in (0,1]$. 
\end{description}

\begin{rem} Note that \ref{hyp:0@0} is not restrictive since we can work with $G(M,x):=F(M,x)-F(0,x)$ which fulfills \ref{hyp:uniformellipticity} with the same ellipticity constants as well as \ref{hyp:concavity} and \ref{eq:H4}. The uniform ellipticity also implies Lipschitz regularity,
\begin{equation}
|F(M,x)-F(N,x)|\le\max\{|\mathcal{P}^{-}(M-N)|,|\mathcal{P}^{+}(M-N)|\}\le n\lambda_{1}|M-N|.\label{eq:lipschitz}
\end{equation}
In particular, 
\begin{equation}
|F(M,x)-F(M,y)|\le|F(M,x)-F(0,x)|+|F(M,y)-F(0,y)|\le2n\lambda_{1}|M|.\label{eq:boundeddifffromlip}
\end{equation}
\end{rem}

\begin{rem} \label{r1} 
Let
\begin{equation*}
\beta(x) = \sup_{M \in \mathcal{S}} \frac{|F(M,x)-F(M,0)|}{|M|} \quad\text{and}\quad \tilde \beta(x) = \sup_{M \in \mathcal{S}} \frac{|F(M,x)-F(M,0)|}{|M|+1},
\end{equation*}
 where $\mathcal{S}$ is the space of symmetric matrices. Note that \ref{eq:H4} implies the H\"older continuity of both $\beta$ and $\tilde \beta$. 
\end{rem}

\begin{rem} \label{r2}
\ref{eq:H4} is equivalent to saying that if $$\bar \beta(x,y) = \sup_{M \in \mathcal{S}} \frac{|F(M,x)-F(M,y)|}{|M|},$$ then $\bar \beta$ is dominated (up to a constant) by $|x-y|^\alpha$ for some $\alpha \in (0,1]$. When $y=0$, this is equivalent to asking that $\beta$ is H\"older continuous at the origin which comes up in \cite{MR1005611} and \cite{MR1351007}. In fact, one may weaken this to a suitable integrability condition.     
\end{rem}

\section{\texorpdfstring{$C^{1,1}$}{C1,1} regularity}

In this section, we prove optimal regularity for $W^{2,n}(B_1)$ solutions of the free boundary problem \eqref{eq:main}:  
\begin{thm} \label{c11}
Let $f \in C^{\alpha}(B_1)$ be a given function and $\Omega$ a domain such that $u:B_{1}\to\mathbb{R}$ is a $W^{2,n}(B_1)$ solution of 
\begin{equation*}
\begin{cases}
F(D^{2}u,x)=f(x) & \text{a.e. in }B_{1}\cap\Omega,\\
|D^{2}u|\le K & \text{a.e. in }B_{1}\backslash\Omega.
\end{cases}
\end{equation*}
Assume $F$ satisfies \ref{hyp:0@0}-\ref{eq:H4}. Then there exists a constant $\overline{C}>0$, depending on $\|u\|_{W^{2,n}(B_{1})}$, $\|f\|_{L^\infty(B_1)}$, the dimension, and the ellipticity constants such that
\[
|D^{2}u|\le\overline{C},\qquad\text{a.e. in }B_{1/2}.
\]
\end{thm}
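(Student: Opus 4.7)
The plan is a rescaling-and-contradiction scheme in the spirit of \cite{ALS, 2012arXiv1212.5809F}. Away from $\partial\Omega$ there is nothing to prove: on $B_1\setminus\Omega$ the Hessian bound is built into the hypothesis, while on any ball compactly contained in $\Omega$ one has $F(D^2u,x)=f(x)$ with $f$ Hölder continuous, so Caffarelli's $W^{2,p}$ theory together with the Evans--Krylov theorem (available by \ref{hyp:uniformellipticity}, \ref{hyp:concavity}, \ref{eq:H4}) yields interior $C^{2,\alpha}$. Hence it suffices to produce a uniform bound for $|D^2u|$ at free boundary points $x_0\in\partial\Omega\cap B_{1/2}$.

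To quantify second-order growth at such a point, I would set
\[
S(x_0,r)=\inf_P\frac{1}{r^2}\,\|u-P\|_{L^\infty(B_r(x_0))},
\]
where the infimum runs over quadratic polynomials $P$ satisfying $F(D^2P,x_0)=f(x_0)$, and argue by contradiction: if $S$ is unbounded, choose sequences $x_k\in\partial\Omega\cap B_{1/2}$, $r_k\to 0$, and near-optimal $P_k$ with $M_k:=S(x_k,r_k)\to\infty$, essentially maximal at the scale $r_k$. The normalized blow-ups
\[
v_k(x)=\frac{u(x_k+r_kx)-P_k(x_k+r_kx)}{M_kr_k^2}
\]
satisfy $\|v_k\|_{L^\infty(B_1)}\le 1$ and a rescaled equation
\[
F_k(D^2v_k,x)=g_k(x),\qquad F_k(M,x):=\tfrac{1}{M_k}\bigl(F(M_kM+D^2P_k,x_k+r_kx)-f(x_k+r_kx)\bigr),
\]
with $F_k$ uniformly elliptic and convex/concave, $g_k\equiv 0$ on the rescaled noncontact set, and $|D^2v_k|\le K/M_k\to 0$ on its complement.

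The key obstacle is twofold: (i) to control $|D^2P_k|/M_k$ so that the operators $F_k$ form a reasonable sequence, and (ii) to produce a uniform $L^n$ bound on $g_k$. Both hinge on the BMO estimate of \cite{MR1978880}, which bounds the Hessian of the best $L^2$ quadratic approximation of $u$ at scale $r_k$ around $x_k$ by $\|f\|_{L^\infty}$ plus a telescoping sum of scaled deviations of the form $S(x_k,2^{-j}r_k)$. Combined with the near-maximality of $M_k$ and a dyadic iteration in the spirit of \cite{2012arXiv1212.5809F}, this yields $|D^2P_k|\le CM_k$, which in turn controls $\|g_k\|_{L^\infty(B_1)}$ uniformly and, since $g_k$ is supported where $|D^2v_k|\to 0$ and is weighted by $1/M_k$ there, gives the required $L^n$ decay. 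Simultaneously, \ref{eq:H4} and $r_k^\alpha\to 0$ force the spatial oscillation of $F_k$ to vanish, so the operators converge (uniformly on compact matrix sets) to a constant-coefficient convex/concave operator $F_\infty$.

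With these bounds in hand, Caffarelli's $W^{2,p}$ estimate provides compactness of $\{v_k\}$ in $C^{1,\gamma}_{\mathrm{loc}}$, and a subsequential limit $v_\infty$ solves the global equation $F_\infty(D^2v_\infty)=0$ on $\mathbb{R}^n$ with quadratic growth. Evans--Krylov and a standard Liouville-type result for uniformly elliptic fully nonlinear equations with quadratic growth then force $v_\infty$ to be a quadratic polynomial. However, the choice of $P_k$ as a near-minimizer in the definition of $S$ forces this limiting quadratic to vanish identically in its second-order part among admissible polynomials, contradicting the normalization $\|v_k\|_{L^\infty(B_1)}=1$ preserved in the limit. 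This completes the contradiction and yields $\sup_{x_0,r}S(x_0,r)<\infty$, which (together with the interior estimates) gives the claimed bound $|D^2u|\le\overline{C}$ a.e.\ in $B_{1/2}$.
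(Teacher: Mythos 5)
Your proposal uses a compactness/contradiction blow-up scheme, while the paper's proof is a direct argument built on three lemmas: a BMO-type projection lemma (Lemma~\ref{lem:boundedBMOminimizer}) showing the constrained $L^2$-best quadratic $P_r(x_0)$ has uniformly bounded deviation; a quadratic growth estimate (Lemma~\ref{lem:r2bound}) giving $\sup_{B_r(x_0)}|u-\tfrac12\langle P_r(x_0)\cdot,\cdot\rangle|\le C_1 r^2$; and a geometric decay estimate for the coincidence set (Lemma~\ref{lem:fastdecay}) stating that if $|P_r(x_0)|$ is large then $|A_{r/2}(x_0)|\le 2^{-n}|A_r(x_0)|$. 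The theorem is then proved by a dichotomy: either $|P_{2^{-k}}(x_0)|$ stays bounded along a subsequence (and Lemma~\ref{lem:r2bound} gives the Hessian bound directly), or $|P_{2^{-k}}(x_0)|$ exceeds a threshold for all small scales, in which case the geometric decay verifies the $L^n$ decay hypothesis of Caffarelli's pointwise $C^{2,\alpha}$ theorem at the threshold scale.

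There is a genuine gap at the end of your argument. The quantity you bound, $S(x_0,r)=\inf_P\,r^{-2}\|u-P\|_{L^\infty(B_r(x_0))}$ over admissible quadratics, is precisely what Lemma~\ref{lem:r2bound} of the paper already bounds \emph{directly} (no contradiction needed): the lemma asserts $S(x_0,r)\le C_1$ uniformly. But $\sup_{x_0,r}S(x_0,r)<\infty$ does \emph{not} yield $|D^2u|\le\overline C$. Admissibility does not constrain the size of $D^2P_r$, and the telescoping estimate $|D^2P_r-D^2P_{r/2}|\le CS$ only gives $|D^2P_{2^{-j}}|\lesssim j$, i.e.\ logarithmic growth. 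This is exactly the familiar fact that a pointwise BMO bound on the Hessian does not imply an $L^\infty$ bound. The paper's entire mechanism for closing this gap is the coincidence-set decay of Lemma~\ref{lem:fastdecay} together with the two-case argument at the threshold scale $2^{-k_0}$, and nothing in your sketch plays that role.

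Two further steps in your outline are unjustified as stated. First, you claim $|D^2v_k|\le K/M_k\to 0$ on the rescaled contact set; what the hypothesis gives there is $|D^2v_k|=|D^2u-D^2P_k|/M_k\le (K+|D^2P_k|)/M_k$, which is not small unless $|D^2P_k|=o(M_k)$, a far stronger statement than the $|D^2P_k|\le CM_k$ you are trying to prove. Second, the assertion that $|D^2P_k|\le CM_k$ "follows from the BMO estimate, near-maximality, and dyadic iteration" is not substantiated: with $S(x_k,\cdot)\lesssim M_k$, the dyadic telescoping only yields $|D^2P_k|\lesssim M_k\log(1/r_k)$, and near-maximality at a single scale does not tame the logarithm. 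Without this bound on $|D^2P_k|/M_k$, the rescaled operators $F_k$ need not have small spatial oscillation (the \ref{eq:H4} estimate produces a term $r_k^\alpha|D^2P_k|/M_k$), so the $W^{2,p}$ compactness step is not available. You would need an ingredient like the paper's Lemma~\ref{lem:fastdecay} to make this scheme close.
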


Since $W^{2,n}(B_1)$ solutions of \eqref{eq:main} are $C^{1,\alpha}(B_1)$, one may utilize the above theorem to deduce an optimal regularity result for more general operators and thereby address a problem discussed by Figalli and Shahgholian \cite[Remark 1.1]{2012arXiv1212.5809F}: 

\begin{cor} \label{genops}
Let $f \in C^{\alpha}(B_1)$ be a given function and $\Omega$ a domain such that  $u:B_{1}\to\mathbb{R}$ is a $W^{2,n}(B_1)$ solution of 
\begin{equation*}
\begin{cases}
F(D^{2}u,Du, u, x)=f(x) & \text{a.e. in }B_{1}\cap\Omega,\\
|D^{2}u(x)|\le K & \text{a.e. in }B_{1}\backslash\Omega,
\end{cases}
\end{equation*}
and assume that: $F(0,v,t,x)=0$ for all $v \in \mathbb{R}^n$, $t \in \mathbb{R}$, and $x \in \Omega$; $F$ satisfies \ref{hyp:0@0}-\ref{hyp:concavity}  in the matrix variable (keeping all other variables fixed); and,  
\begin{equation*} 
|F(M,w_1, s_1, x_1)-F(M, w_2, s_2, x_2)|\le C|M|(|w_1-w_2|^{\alpha_1}+|s_1-s_2|^{\alpha_2}+|x_1-x_2|^{\alpha_3}),
\end{equation*}
for some $\alpha_i \in (0,1]$. Then there exists a 
constant $\overline{C}>0$, depending on $\|u\|_{W^{2,n}(B_{1})}$, $\|f\|_{L^\infty(B_1)}$, the dimension, and the ellipticity constants such that
\[
|D^{2}u|\le\overline{C},\qquad\text{a.e. in }B_{1/2}.
\]
\end{cor}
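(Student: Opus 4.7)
The plan is to freeze the lower-order dependence of $F$ by substituting the solution's own values and gradient into the last three arguments, thereby reducing the problem to the setting of Theorem~\ref{c11}. As the authors note immediately before the statement, any $W^{2,n}(B_1)$ solution of \eqref{eq:main} is automatically in $C^{1,\alpha}(B_1)$ for some $\alpha \in (0,1)$ (by Caffarelli's regularity theory for fully nonlinear uniformly elliptic equations). In particular $u$ is Lipschitz with a Hölder-continuous gradient on $B_1$.

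With this in hand, I define the reduced operator
\[
\tilde F(M,x) := F\bigl(M,\, Du(x),\, u(x),\, x\bigr),
\]
so that $u$ satisfies $\tilde F(D^2u,x) = f(x)$ a.e.\ in $B_1 \cap \Omega$ and $|D^2u| \le K$ a.e.\ in $B_1\setminus\Omega$. I would then verify that $\tilde F$ inherits \ref{hyp:0@0}--\ref{eq:H4} so that Theorem~\ref{c11} applies:
\begin{description}
\item[\ref{hyp:0@0}] $\tilde F(0,x) = F(0,Du(x),u(x),x) = 0$ by the hypothesis $F(0,v,t,x)=0$.
\item[\ref{hyp:uniformellipticity}, \ref{hyp:concavity}] These are properties in the matrix variable only, with the other slots frozen, so they transfer verbatim from $F$ to $\tilde F$.
\item[\ref{eq:H4}] Using the joint Hölder hypothesis on $F$ and the $C^{1,\alpha}$ regularity of $u$, for $x,y \in B_1$,
\[
|\tilde F(M,x) - \tilde F(M,y)| \le C|M|\bigl(|Du(x)-Du(y)|^{\alpha_1} + |u(x)-u(y)|^{\alpha_2} + |x-y|^{\alpha_3}\bigr).
\]
Since $|Du(x)-Du(y)| \le C|x-y|^{\alpha}$ and $|u(x)-u(y)| \le C|x-y|$, the right-hand side is bounded by $C|M||x-y|^{\beta}$ with $\beta := \min\{\alpha\alpha_1,\,\alpha_2,\,\alpha_3\} \in (0,1]$.
\end{description}

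Once \ref{hyp:0@0}--\ref{eq:H4} are verified for $\tilde F$ with exponent $\beta$, Theorem~\ref{c11} applied to $\tilde F$ yields the desired a.e.\ bound $|D^2u| \le \overline{C}$ on $B_{1/2}$, with $\overline{C}$ depending on $\|u\|_{W^{2,n}(B_1)}$, $\|f\|_{L^\infty(B_1)}$, the dimension, and the ellipticity constants (the dependence on $\|u\|_{C^{1,\alpha}}$ is in turn controlled by $\|u\|_{W^{2,n}(B_1)}$ and the structural constants).

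I do not expect a genuine obstacle here: the argument is essentially a bookkeeping reduction whose only non-trivial ingredient is the preliminary $C^{1,\alpha}$ regularity of $W^{2,n}$ solutions, and this is already recorded in the paper. The mildly delicate point is to track that the Hölder exponent $\beta$ produced for $\tilde F$ is strictly positive, which is automatic from the assumption $\alpha_i \in (0,1]$.
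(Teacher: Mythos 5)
Your proposal is correct and is essentially identical to the paper's proof: the authors define the same reduced operator $\tilde F(M,x):=F(M,Du(x),u(x),x)$ and invoke the $C^{1,\alpha}$ regularity of $W^{2,n}$ solutions to conclude that $\tilde F$ satisfies the hypotheses of Theorem~\ref{c11}. You have simply written out the verification of \ref{hyp:0@0}--\ref{eq:H4} that the paper leaves implicit.
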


\begin{proof}
Define $$\tilde F(M,x):= F(M, Du(x), u(x), x),$$ and simply note that the assumptions on $F$ together with the fact that $u \in C^{1,\alpha}(B_1)$ imply that $\tilde F$ satisfies the assumptions of Theorem \ref{c11}.   
\end{proof}

\noindent {\bf Standing assumptions:}
Unless otherwise stated, we let $x_{0}\in B_{1/2} \cap \overline{\Omega}$ and assume without loss of generality that $u(x_{0})=|\nabla u(x_{0})|=0$ (otherwise we can replace $u(x)$ with $\tilde u(x):=u(x)-u(x_0)-\nabla u(x_0)\cdot (x-x_0)$). 

\noindent Moreover, set
\[
	A_{r}(x_{0}):=\frac{(B_{r}(x_{0})\backslash\Omega) -x_0}{r}=B_{1}\backslash((\Omega-x_{0})/r).
\]
Whenever we refer to a solution $u$ of \eqref{eq:main}, it is implicit that $u \in W^{2,n}$ and $F$ satisfies \ref{hyp:0@0}-\ref{eq:H4}.

The theorem will be established through several key lemmas. The first step consists of finding a suitable approximation for the Hessian of $u$ at $x_0$ through the following projection lemma.      
    
\begin{lem} 
\label{lem:boundedBMOminimizer} Let $f\in L^\infty(B_1)$ and $u$ be a solution to \eqref{eq:main}.
Then there exists a constant $C=C(\|u\|_{W^{2,n}(B_{1})},\|f\|_{L^\infty(B_1)},n,\lambda_{0})>0$
such that 
\[
\min_{F(P, x_0)=f(x_{0})}\fint_{B_{r}(x_{0})}|D^{2}u(y)-P|^{2}\, dy\le C,\qquad\forall r\in(0,1/4).
\]
\end{lem}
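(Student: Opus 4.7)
The plan is to reduce the lemma to Caffarelli's interior BMO estimate in \cite{MR1978880} by globalizing the equation, then to enforce the constraint $F(P,x_0) = f(x_0)$ by an elementary projection.

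Step one: globalize the equation. Set $g(x) := F(D^2u(x), x)$ a.e.\ in $B_1$. On $B_1 \cap \Omega$, $g \equiv f$; on $B_1 \setminus \Omega$, the bound $|D^2u| \le K$ together with \ref{hyp:0@0} and \eqref{eq:lipschitz} (applied with $N=0$) yield $|g| \le n\lambda_1 K$. Hence $g \in L^\infty(B_1)$ with $\|g\|_{L^\infty(B_1)} \le \max\{\|f\|_{L^\infty(B_1)}, n\lambda_1 K\}$, and $u$ is a $W^{2,n}$ solution of $F(D^2u, x) = g(x)$ a.e.\ in $B_1$.

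Step two: apply the BMO estimate. The operator $F$ is uniformly elliptic by \ref{hyp:uniformellipticity}, concave/convex in $M$ by \ref{hyp:concavity}, and by \ref{eq:H4} (cf.\ Remark \ref{r1}) the $x$-oscillation $\beta(x, x_0) := \sup_M |F(M,x) - F(M,x_0)|/(|M|+1)$ satisfies $\beta(x,x_0) \le C|x-x_0|^\alpha$ and vanishes as $x \to x_0$. A standard rescaling $\tilde u(y) := \rho^{-2} u(x_0 + \rho y)$ with $\rho$ small makes $\|\beta(\cdot,x_0)\|_{L^n}$ small enough to meet the smallness assumption of \cite{MR1978880}; Caffarelli's interior BMO estimate then applies to $\tilde u$ with bounded right-hand side. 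Scaling back, we obtain
\[
	\fint_{B_r(x_0)} |D^2u(y) - P_r|^2 \, dy \le C, \qquad P_r := \fint_{B_r(x_0)} D^2u\,dy,
\]
for every $r \in (0, 1/4)$, with $C$ of the claimed form.

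Step three: enforce the constraint. By uniform ellipticity, the map $t \mapsto F(P_r + tI, x_0)$ is strictly monotone with slope in $[n\lambda_0, n\lambda_1]$, so a unique $t \in \mathbb{R}$ solves $F(P_r + tI, x_0) = f(x_0)$ with $|t| \le |F(P_r, x_0) - f(x_0)|/(n\lambda_0)$. To bound the residual, I decompose
\[
	F(P_r, x_0) - f(x_0) = \Bigl[F(P_r, x_0) - \fint_{B_r(x_0)} F(D^2u, x_0)\,dy\Bigr] + \Bigl[\fint_{B_r(x_0)} F(D^2u, x_0)\,dy - f(x_0)\Bigr].
\]
The first bracket is controlled by \eqref{eq:lipschitz}, Cauchy--Schwarz, and Step two. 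The second is handled by splitting the integral over $B_r(x_0) \cap \Omega$ (where $F(D^2u, x_0) = f(x) + O(r^\alpha |D^2u|)$ by \ref{eq:H4}, H\"older continuity of $f$, and the PDE on $\Omega$) and $B_r(x_0) \setminus \Omega$ (where $|F(D^2u, x_0)| \le n\lambda_1 K$), after which BMO-type control of $\fint_{B_r(x_0)} |D^2u|$ (growing at most logarithmically in $1/r$) keeps $r^\alpha \fint |D^2u|$ bounded for $r < 1/4$. Setting $P := P_r + tI$ gives $F(P,x_0) = f(x_0)$ and $\fint_{B_r(x_0)} |D^2u - P|^2 \le 2\fint|D^2u - P_r|^2 + 2n|t|^2 \le C$.

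The main obstacle is step two: the BMO theory of \cite{MR1978880} demands smallness of the oscillation $\beta$, and \ref{eq:H4} delivers this only locally near $x_0$, forcing the rescaling and careful bookkeeping of constants (with $\|u\|_{L^\infty(B_1)}$ absorbed into $\|u\|_{W^{2,n}(B_1)}$ via Sobolev embedding). A secondary subtlety is the residual estimate in step three, where the $r^\alpha$ gain from \ref{eq:H4} must beat the growth of $\fint|D^2u|$; this ultimately relies on the BMO bound from step two via John--Nirenberg.
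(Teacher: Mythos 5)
Your proposal follows essentially the same route as the paper: globalize to a bounded right-hand side $g := F(D^2u,\cdot) \in L^\infty$, invoke the BMO estimate for $D^2u$ (the paper cites it from \cite{2012arXiv1212.5809F} rather than re-deriving from \cite{MR1978880}, but that is cosmetic), and then correct $P_r = (D^2u)_{r,x_0}$ by a multiple of the identity, using ellipticity to solve $F(P_r + tI, x_0) = f(x_0)$ and to bound $|t|$ by the residual $|F(P_r,x_0) - f(x_0)|$. Your decomposition of the residual into $[F(P_r,x_0) - \fint F(D^2u,x_0)] + [\fint F(D^2u,x_0) - f(x_0)]$ is the same triangle-inequality bookkeeping as the paper's bound on $|F(Q_r,x_0)|$, just organized slightly differently.

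Two small remarks. First, you invoke H\"older continuity of $f$ in Step three, but the lemma hypothesizes only $f \in L^\infty(B_1)$; fortunately this is harmless, since in the term $\fint_{B_r\cap\Omega} f(x)\,dx - f(x_0)$ the crude bound $2\|f\|_{L^\infty}$ already suffices and no modulus of continuity is needed. Second, you lean on \ref{eq:H4} together with the telescoping/John--Nirenberg observation that $\fint_{B_r}|D^2u|$ grows at most like $\log(1/r)$ to tame the term $\fint_{B_r}|F(D^2u,x_0) - F(D^2u,y)|$; the paper's remark following Corollary \ref{cor:2rTOr} asserts \ref{eq:H4} is not needed here and bounds this term rather tersely by $2n\lambda_1\|D^2u\|_{W^{2,n}(B_1)}$, which is not a uniform-in-$r$ bound on $\fint_{B_r}|D^2u|$ as written. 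Your more careful handling (H\"older gain $r^\alpha$ beating the logarithmic growth of the averages) is a legitimate and arguably cleaner way to close this step, at the price of using \ref{eq:H4} where the paper claims one can avoid it.
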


\begin{proof}
Let $Q_{r}(x_{0}):=(D^{2}u)_{r,x_{0}}=\fint_{B_{r}(x_{0})}D^{2}u(y)\, dy$ and note that for $t \in \mathbb{R}$, the ellipticity and boundedness of $F$ implies  
\begin{align*}
 & \mathcal{P}^{-}(t\iid)\le F(Q_{r}(x_{0})+t\iid,x_{0})-F(Q_{r}(x_{0}),x_{0})\le\mathcal{P}^{+}(t \iid)\\
\Rightarrow & \lambda_{0}tn-C\le F(Q_{r}(x_{0})+t\iid,x_{0})\le\lambda_{1}tn+C.
\end{align*}
Thus, there exists $\xi_{r}(x_{0}) \in \mathbb{R}$ such that $F(Q_{r}(x_{0})+\xi_{r}(x_{0})\iid,x_{0})=f(x_{0})$ (by continuity). With this in mind,  
\begin{align*}  
&\phantom{\le}\min_{F(P, x_0)=f(x_{0})}\fint_{B_{r}(x_{0})}|D^{2}u(y)-P|^{2}\, dy\\
& \le\fint_{B_{r}(x_{0})}|D^{2}u(y)-Q_{r}(x_{0})-\xi_{r}(x_{0})\iid|^{2}\, dy\\
 & \le2\fint_{B_{r}(x_{0})}|D^{2}u(y)-Q_{r}(x_{0})|^{2}\, dy+2\xi_{r}(x_{0})^{2}\\
 & \le 2 C_{BMO} + 2\xi_{r}(x_{0})^{2},
\end{align*}
where we have used the BMO estimate in \cite{2012arXiv1212.5809F}. It remains to find a uniform bound on $\xi_{r}(x_{0})$: applying \eqref{eq:lipschitz}, \eqref{eq:boundeddifffromlip}, Hölder's inequality, and the BMO estimate again, we obtain
\begin{align*}
|F(Q_{r}(x_{0}),x_{0})| & = \bigg|\fint_{B_{r}(x_{0})}F(Q_{r}(x_0)-D^{2}u(y)+D^{2}u(y),x_{0})\, dy\bigg|\\
 & \le\fint_{B_{r}(x_{0})}|F(D^{2}u(y),x_{0})|+n\lambda_{1}|D^{2}u(y)-Q_{r}(x_{0})|\, dy\\
 & \le\fint_{B_{r}(x_{0})}\big(|F(D^{2}u(y),x_{0})-F(D^{2}u(y),y)|+|F(D^{2}u(y),y)|\\
 & \qquad+n\lambda_{1}|D^{2}u(y),x_{0})-Q_{r}(x_{0})|\,\big) dy\\
 & \le \fint_{B_{r}(x_{0})}|F(D^{2}u(y),x_{0})-F(D^{2}u(y),y)|\, dy\\
 & \qquad+\max\{||f||_\infty, n\lambda_1 K\}\\
 & \qquad  +n\lambda_{1}\sqrt{\fint_{B_{r}(x_{0})}|D^{2}u(y)-Q_{r}(x_{0})|^2\, dy}\\
 & \le 2n\lambda_1\|D^{2}u\|_{W^{2,n}(B_{1})}+\max\{||f||_\infty, n\lambda_1 K\}+C_{BMO} =:C.
\end{align*}
Thus, 
\begin{align*}
 & \mathcal{P}^{-}(\xi_{r}(x_{0})\iid)\le F(Q_{r}(x_{0})+\xi_{r}(x_{0})\iid,x_{0})-F(Q_{r}(x_{0}),x_{0})\le\mathcal{P}^{+}(\xi_{r}(x_{0})\iid)\\
\Rightarrow & \lambda_{0}\xi_{r}(x_{0})n-C\le F(Q_{r}(x_{0})+\xi_{r}(x_{0})\iid,x_{0})\le\lambda_{1}\xi_{r}(x_{0})n+C\\
\Rightarrow & \lambda_{0}\xi_{r}(x_{0})n-C\le f(x_{0})\le\lambda_{1}\xi_{r}(x_{0})n+C.
\end{align*}
In particular, $|\xi_{r}(x_{0})| \le \frac{\Vert f\Vert_{\infty}+C}{\lambda_{0}n}$ and this concludes the proof.
\end{proof}

\noindent In what follows, let $P_{r}(x_{0})$ denote any minimizer of $$\min_{F(P,x_0)=f(x_{0})}\fint_{B_{r}(x_{0})}|D^{2}u(y)-P|^{2}\, dy,$$
for $r\in(0,1/4)$. Lemma \ref{lem:boundedBMOminimizer} and the triangle inequality readily imply that the growth of $P_r(x_0)$ is controlled in $r$:

\begin{cor}\label{cor:2rTOr}
\label{lem:BMO-consequence}Let $f\in L^\infty(B_1)$ and $u$ be a solution to \eqref{eq:main}.
Then there exists a constant $C_{0}=C_{0}(\|D^{2}u\|_{W^{2,n}(B_{1})},\|f\|_{L^\infty(B_1)},n,\lambda_{0})$
such that 
\begin{equation*}
|P_{2r}(x_{0})-P_{r}(x_{0})|\le C_{0} \qquad\forall r\in(0,1/8).
\end{equation*}
\end{cor}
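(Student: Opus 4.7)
The plan is to bound the difference $|P_{2r}(x_0)-P_r(x_0)|$ by comparing both projections to $D^2 u$ on the same ball, namely $B_r(x_0)$, and invoking Lemma \ref{lem:boundedBMOminimizer} twice.

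First, by the elementary inequality $|a-b|^2 \le 2|a-c|^2 + 2|c-b|^2$ applied pointwise with $c = D^2u(y)$, and then averaging over $y \in B_r(x_0)$, I obtain
\[
|P_{2r}(x_0)-P_r(x_0)|^2 \le 2\fint_{B_r(x_0)} |D^2u(y)-P_r(x_0)|^2 \, dy + 2\fint_{B_r(x_0)} |D^2u(y)-P_{2r}(x_0)|^2 \, dy.
\]
The first term on the right is controlled directly by Lemma \ref{lem:boundedBMOminimizer} applied at scale $r$, yielding a bound by $2C$.

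For the second term, the only subtlety is that $P_{2r}(x_0)$ is a minimizer at scale $2r$, not $r$. I would use the trivial inclusion $B_r(x_0) \subset B_{2r}(x_0)$ to inflate the average:
\[
\fint_{B_r(x_0)} |D^2u(y)-P_{2r}(x_0)|^2 \, dy \le \frac{|B_{2r}(x_0)|}{|B_r(x_0)|}\fint_{B_{2r}(x_0)} |D^2u(y)-P_{2r}(x_0)|^2 \, dy = 2^n \fint_{B_{2r}(x_0)} |D^2u(y)-P_{2r}(x_0)|^2 \, dy,
\]
and Lemma \ref{lem:boundedBMOminimizer} at scale $2r$ (which is in $(0,1/4)$ since $r \in (0,1/8)$) bounds this by $2^n C$. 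Combining the two estimates gives $|P_{2r}(x_0)-P_r(x_0)|^2 \le 2C(1+2^n)$, so one may take $C_0 := \sqrt{2C(1+2^n)}$, which depends only on the quantities listed in the statement.

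There is no real obstacle here; the argument is the standard telescoping trick used in Campanato-type iteration schemes. The only point to keep track of is the dimensional factor $2^n$ arising from the change of ball in the average, and the requirement $r \in (0,1/8)$ which ensures that both $r$ and $2r$ lie in the admissible range $(0,1/4)$ of Lemma \ref{lem:boundedBMOminimizer}.
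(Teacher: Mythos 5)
Your proof is correct and is precisely the argument the paper leaves implicit (the paper only remarks that the corollary follows from Lemma \ref{lem:boundedBMOminimizer} and the triangle inequality). The quadratic triangle inequality centered at $D^{2}u(y)$, averaged over $B_{r}(x_{0})$, together with the inflation factor $2^{n}$ from passing to $B_{2r}(x_{0})$, is exactly the intended telescoping step, and the resulting constant depends only on the quantities claimed.
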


\begin{rem}
We note that \ref{eq:H4} is not needed in the proofs of Lemma \ref{lem:boundedBMOminimizer} and Corollary \ref{cor:2rTOr}. 
\end{rem}

\noindent Next we verify that $P_r(x_0)$ is a suitable approximation to $D^2u(x_0)$.

\begin{lem} \label{lemo}
\label{lem:r2bound}Let $f\in L^\infty(B_1)$ and $u$ be a solution to \eqref{eq:main}. Then
there exists a constant $C_{1}=C_1(K,\|f\|_{L^\infty(B_1)},n,\lambda_0, \lambda_{1}, ||u||_{W^{2,n}(B_1)})$
such that 
\begin{equation*}
\sup_{x\in B_{r}(x_{0})}\bigg|u(x)-\frac{1}{2}\left\langle P_{r}(x_{0})(x-x_{0}),(x-x_{0})\right\rangle \bigg|\le C_{1}r^{2} \label{eq:r2bound} \qquad\forall r\in(0,1/8).
\end{equation*}
\end{lem}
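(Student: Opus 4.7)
The plan is to argue by contradiction and compactness. Define
\[
w_r(x) := u(x) - \tfrac{1}{2}\langle P_r(x_0)(x-x_0),\,x-x_0\rangle,
\]
so that $D^2 w_r = D^2 u - P_r(x_0)$ and, by the standing assumption, $w_r(x_0) = |\nabla w_r(x_0)| = 0$; Lemma \ref{lem:boundedBMOminimizer} gives $\fint_{B_r(x_0)}|D^2 w_r|^2 \le C$. The triangle inequality and the dyadic estimate of Corollary \ref{cor:2rTOr} reduce the claim to the half-ball version $\sup_{B_{r/2}(x_0)}|w_r| \le C_1 r^2$. Assuming the latter fails, one extracts sequences $r_k \in (0,1/8)$, $x_0^k \in B_{1/2}\cap \overline{\Omega}$, and solutions $u_k$ (with uniform bounds on $\|u_k\|_{W^{2,n}(B_1)}$ and $\|f_k\|_{L^\infty(B_1)}$) such that, setting $P_k := P_{r_k}(x_0^k)$, $M_k := \sup_{B_{r_k/2}(x_0^k)}|w_k^{r_k}| > k r_k^2$.

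Rescale by $v_k(y) := w_k^{r_k}(x_0^k + r_k y)/M_k$ on $B_1$: then $\|v_k\|_{L^\infty(B_{1/2})} = 1$ with the supremum attained in $\overline{B}_{1/2}$, while $v_k(0) = 0$ and $\nabla v_k(0) = 0$. A change of variables combined with Lemma \ref{lem:boundedBMOminimizer} yields $\|D^2 v_k\|_{L^2(B_1)}^2 \le C r_k^4/M_k^2 \le C/k^2 \to 0$. Writing the elliptic equation in terms of $v_k$, subtracting $F_k(P_k, x_0^k + r_k y)$, and multiplying by $r_k^2/M_k$, uniform ellipticity together with \ref{eq:H4} and the identity $F_k(P_k, x_0^k) = f_k(x_0^k)$ yield, on the rescaled elliptic region,
\[
\mathcal{P}^-(D^2 v_k) \le \tfrac{C(1+|P_k|r_k^\alpha)}{k}, \qquad \mathcal{P}^+(D^2 v_k) \ge -\tfrac{C(1+|P_k|r_k^\alpha)}{k},
\]
while $|D^2 v_k| \le (K+|P_k|)/k$ in the rescaled obstacle region.

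The crucial technical step is then to control $|P_k|/k$. From Corollary \ref{cor:2rTOr} iterated between $1/8$ and $r_k$ one has $|P_k| \le C(1 + |\log r_k|)$. On the other hand, a Chebyshev argument applied to the BMO estimate of Lemma \ref{lem:boundedBMOminimizer} combined with the $W^{2,n}$-bound on $D^2 u_k$ shows that $|P_k|r_k \le C$ whenever $|P_k|$ is moderately large: indeed, on a set of density at least $1/2$ in $B_{r_k}(x_0^k)$, $|D^2 u_k| \gtrsim |P_k|$, hence $\|D^2 u_k\|_{L^n(B_1)}^n \gtrsim |P_k|^n r_k^n$. If $|P_k| \ge 2k$, the latter gives $r_k \le C/k$, so $|P_k| \le C\log k$, contradicting $|P_k| \ge 2k$ for large $k$. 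Thus $|P_k|/k$ is bounded, and $v_k$ satisfies the global Pucci inequalities $|\mathcal{P}^\pm(D^2 v_k)| \le \eta_k$ on $B_1$ with $\eta_k$ uniformly bounded.

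Caffarelli's interior $C^{1,\alpha}$ estimate for viscosity solutions of Pucci inequalities then produces a uniform $C^{1,\alpha}(B_{1/2})$ bound on $v_k$, so a subsequence converges in $C^1(B_{1/2})$ to some $v_\infty$. The $L^2$-vanishing $D^2 v_k \to 0$ implies $D^2 v_\infty = 0$ distributionally, so $v_\infty$ is affine on $B_{1/2}$; combined with $v_\infty(0) = 0$ and $\nabla v_\infty(0) = 0$ (from the $C^1$ convergence of $v_k$ and the pointwise vanishing of $v_k$ and $\nabla v_k$ at $0$), we obtain $v_\infty \equiv 0$. This contradicts $\|v_k\|_{L^\infty(B_{1/2})} = 1$ with the supremum attained in $\overline{B}_{1/2}$. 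The main obstacle is the control on $|P_k|$: neither the logarithmic BMO bound nor the $W^{2,n}$-derived $1/r_k$-bound is individually sufficient to rule out the problematic regime $|P_k| \gtrsim k$, and only their combination closes the argument.
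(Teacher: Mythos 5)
Your approach (contradiction + compactness + the $L^2$ vanishing of the rescaled Hessians) is genuinely different from the paper's direct estimate, but two steps in it have gaps.

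\textbf{The control on $|P_k|$ does not close.} You assert that the two bounds
$|P_k|\le C(1+|\log r_k|)$ and (for $|P_k|$ moderately large) $|P_k|\,r_k\le C$
together force $|P_k|/k$ to be bounded, via ``if $|P_k|\ge 2k$ then $r_k\le C/k$, so $|P_k|\le C\log k$.'' But $r_k\le C/k$ gives a \emph{lower} bound $|\log r_k|\ge \log(k/C)$; fed into the inequality $|P_k|\le C(1+|\log r_k|)$ this is not an upper bound $C\log k$, since $r_k$ is allowed to be far smaller than $1/k$. Concretely, a sequence with $r_k\sim e^{-k^2}$ and $|P_k|\sim k^2$ is compatible with both your (a) and (b) and with $M_k> k r_k^2$, yet has $|P_k|/k\to\infty$. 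Thus $\eta_k$ (most problematically the contribution $(K+|P_k|)/k$ on the rescaled coincidence set) is not shown to be bounded, and neither the Pucci estimates nor the $C^{1,\alpha}$ interior estimate can be applied with uniform constants. The parenthetical claim in your last paragraph --- that the combination of the logarithmic bound and the $W^{2,n}$ bound closes the argument --- is therefore not correct as stated.

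\textbf{The $L^\infty$ input for the $C^{1,\alpha}$ estimate is missing.} Interior $C^{1,\alpha}$ estimates for $v_k\in S^*(\lambda_0,\lambda_1,\eta_k)$ produce bounds on $\|v_k\|_{C^{1,\alpha}(\overline B_{1/2})}$ in terms of $\|v_k\|_{L^\infty(B_\rho)}$ for some $\rho>1/2$. Your normalization $M_k=\sup_{B_{r_k/2}(x_0^k)}|w_k^{r_k}|$ only gives $\sup_{B_{1/2}}|v_k|=1$; nothing is established about $\sup_{B_1}|v_k|$ (nor about any $B_\rho$, $\rho>1/2$). Without that, you cannot extract a subsequence converging in $C^1$ on $\overline B_{1/2}$, which is what the contradiction (sup equal to $1$ attained somewhere in $\overline B_{1/2}$) needs.

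For contrast, the paper's proof avoids both issues: it rewrites the equation for $u_{r,x_0}$ using the frozen, shifted operator $G(Q):=F(P_r(x_0)+Q,x_0)-f(x_0)$, which has $G(0)=0$, the same ellipticity constants as $F$, and \emph{no} dependence on $|P_r(x_0)|$ in its structural bounds; the right-hand side is a bounded $\phi$ plus a term $h$ whose $L^n$ average decays thanks to \ref{eq:H4}, and Caffarelli's Theorem~2 from \cite{MR1005611} applies with $r$-independent constants. The $L^\infty(B_1)$ bound on the recentered $\overline u_{r,x_0}$ is then obtained by the local maximum principle (Corollary~9.20 in \cite{gilbarg:01}, applied to the subsolutions $\overline u_{r,x_0}^\pm$ furnished by a linear functional $L\ge G$, $L(0)=0$, from concavity), followed by two applications of Poincar\'e together with Lemma~\ref{lem:boundedBMOminimizer}. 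In other words, the $|P_r|$-dependence is absorbed into the definition of $G$ and never needs to be estimated, and the $L^\infty(B_1)$ control --- the very input your compactness argument lacks --- is the central step of the direct proof. If you wish to keep the contradiction/compactness scheme, you should recenter $v_k$ by its zeroth- and first-order averages on $B_1$, use Poincar\'e to bound the recentered function in $L^2(B_1)$, then the local maximum principle to get $L^\infty(B_{3/4})$, but even then you still need to fix the unjustified bound on $|P_k|$ before the Pucci right-hand side $\eta_k$ is under control.
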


\begin{proof}
Assume without loss of generality that $F$ is concave (otherwise, consider $\tilde F(M,x):=-F(-M,x)$ and $v=-u$) and define $$u_{r,x_{0}}(y):=\frac{u(ry+x_{0})}{r^{2}}-\frac{1}{2}\langle P_{r}(x_{0})y,y\rangle,$$
$$G(Q):=G(Q,x_{0}):=F(P_{r}(x_{0})+Q,x_{0})-f(x_{0}).$$ Then, $G(0)=0$ and 
\begin{align*}
G(D^2 u_{r,x_{0}}(y))&=F(D^2u(ry+x_0),x_0)-f(x_0)\\
&=F(D^2u(ry+x_0),ry+x_0)-f(x_0)+h(y),
\end{align*}
where $h(y):=F(D^2u(ry+x_0),x_0)-F(D^2u(ry+x_0),ry+x_0)$. Thus, $u_{r,x_{0}}$ solves
\begin{equation*}
\begin{cases}
G(D^{2}u_{r,x_{0}}(y))=f(ry+x_{0})-f(x_{0})+h(y), & \text{in } B_{1} \setminus A_{r}(x_{0}),\\
G(D^{2}u_{r,x_{0}}(y))= F(D^2u(ry+x_0),ry+x_0)-f(x_0)+h(y), & \text{in } A_{r}(x_{0}).
\end{cases}
\end{equation*}
Next note that if $ry+x_0 \notin \Omega$, then $F(D^2u(ry+x_0),ry+x_0)$ is bounded, so by letting 
\begin{equation*} 
\begin{cases}
\phi(y):=f(ry+x_{0})-f(x_{0})& \text{in } B_{1} \setminus A_{r}(x_{0}),\\
\phi(y):= F(D^2u(ry+x_0),ry+x_0)-f(x_0)& \text{in } A_{r}(x_{0}),
\end{cases}
\end{equation*}
it follows that $\phi$ has an $L^\infty$ bound depending only on the given data and  

\begin{equation} \label{G}
G(D^{2}u_{r,x_{0}}(y))=\phi(y)+h(y) \hskip .2in  \text{a.e. in }B_{1}.
\end{equation}   
Moreover, $\overline{u}_{r,x_{0}}(y):=u_{r,x_{0}}(y)-(u_{r,x_{0}})_{1,0}-y\cdot(\nabla u_{r,x_{0}})_{1,0}$
solves the same equation. Since $$u_{r,x_{0}}(0)=|\nabla u_{r,x_{0}}(0)|=0,$$  (recall $u(x_0)=|\nabla u(x_0)|=0$ by assumption) it follows that 
\begin{align*}
(u_{r,x_{0}})_{1,0}&=-\overline{u}_{r,x_{0}}(0),\\
(\nabla u_{r,x_{0}})_{1,0}&=-\nabla\overline{u}_{r,x_{0}}(0),
\end{align*}
and we may write $u_{r,x_{0}}(y)=\overline{u}_{r,x_{0}}(y)-\overline{u}_{r,x_{0}}(0)-y\cdot\nabla\overline{u}_{r,x_{0}}(0)$.
Next we wish to apply Theorem 2 in \cite{MR1005611}. First note that our assumptions on $F$ imply the required interior a priori estimates for $G$; moreover, $G$ has no spatial dependence so it remains to verify the $L^n$ condition of $\phi+h$. Since $\phi$ has an $L^\infty$ bound depending only on the given data, we need to verify it solely for $h$.  Indeed, let $s \leq 1$ and note that thanks to \ref{eq:H4},  
\begin{equation} \label{cond}
\int_{B_s} |h(y)|^n dy \leq (rs)^{\alpha n} \int_{B_s} |D^2u(ry+x_0)|^n dy \leq C||u||_{W^{2,n}(B_1)} s^{\alpha n}.
\end{equation}
Therefore, applying the theorem yields
\begin{align}
\Vert u_{r,x_{0}}\Vert_{L^{\infty}(\overline{B}_{1/2})} & =\Vert\overline{u}_{r,x_{0}}-\overline{u}_{r,x_{0}}(0)-y\cdot\nabla\overline{u}_{r,x_{0}}(0)\Vert_{L^{\infty}(\overline{B}_{1/2})} \nonumber \\
 & \le C(\Vert\overline{u}_{r,x_{0}}\Vert_{L^{\infty}(B_{1})}+1) \label{sdu},
\end{align}
where $C$ does not depend on $r$. Moreover, due to the concavity of $G$ (which is inherited from $F$),
there is a linear functional $L$ so that $L(Q)\ge G(Q,x_0)$ and $L(0)=0$ (this linear functional depends on $x_0$). In particular, $$L(D^{2}\overline{u}_{r,x_{0}}(y))\ge G(D^{2}\overline{u}_{r,x_{0}}(y),x_0)=\phi(y)+h(y),$$ a.e.
in $B_{1}$ (recall \eqref{G}); this fact together with \eqref{cond} and Corollary 9.20 in \cite{gilbarg:01} applied to the subsolutions  $\overline{u}_{r,x_{0}}^+$ and $\overline{u}_{r,x_{0}}^-$ implies 

\begin{align*}
\Vert\overline{u}_{r,x_{0}}\Vert_{L^{\infty}(B_{1})} &\le C\Vert\overline{u}_{r,x_{0}}\Vert_{L^{2}(B_{1})}+\Vert \phi+h\Vert_{L^{n}(B_{1})}\\
&\le C\Vert\overline{u}_{r,x_{0}}\Vert_{L^{2}(B_{1})}+C(K,\|f\|_{L^\infty(B_1)},n,\lambda_0, \lambda_{1}, \|u\|_{W^{2,n}(B_1)}),
\end{align*}
and applying the Poincaré inequality twice yields 
\begin{align*}
\Vert\overline{u}_{r,x_{0}}\Vert_{L^{2}(B_{1})} & \le C\Vert D^{2}u_{r,x_{0}}\Vert_{L^{2}(B_{1})}=C\fint_{B_{r}(x_{0})}|D^{2}u(y)-P_r(x_0)|^{2}\, dy\le C,
\end{align*}
where Lemma \ref{lem:boundedBMOminimizer} is used in the last inequality. This combined with \eqref{sdu} implies $$\Vert u_{r,x_{0}}\Vert_{L^{\infty}(B_{1/2})}\le C;$$
thus,
\[
\sup_{B_{r/2}(x_{0})}\bigg|\frac{u(x)-\frac{1}{2}\left\langle P_{r}(x_{0})(x-x_{0}),(x-x_{0})\right\rangle }{r^{2}}\bigg|\le C.
\]
The result now follows by replacing $r/2$ with $r$ and utilizing Corollary \ref{lem:BMO-consequence}.

\end{proof}
\begin{lem}
\label{lem:fastdecay}Let $f\in C^0(B_1)$ and $u$ be a solution to \eqref{eq:main}. Then there exists a constant $M=M(K,\|f\|_{L^\infty(B_1)},n,\lambda_{0})$ such
that, for any $r\in(0,1/8)$,
\[
|A_{r/2}(x_{0})|\le\frac{|A_{r}(x_{0})|}{2^{n}}
\]
if $|P_{r}(x_{0})|>M$.\end{lem}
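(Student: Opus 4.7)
The plan is to combine Caffarelli's interior $W^{2,n}$ estimate applied to the rescaled function $v_r$ with the pointwise lower bound on $D^2 v_r$ forced by the constraint $|D^2u|\le K$ on the coincidence set.

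Setting $v_r(y):=u(ry+x_0)/r^2-\tfrac{1}{2}\langle P_r(x_0)y,y\rangle$ for $y\in B_1$, Lemma~\ref{lem:r2bound} gives $\|v_r\|_{L^\infty(B_1)}\le C_1$, and as observed in the proof there, $v_r$ satisfies $G(D^2 v_r)=\phi+h$ in $B_1$ with $\|\phi\|_{L^\infty(B_1)}$ and $\|h\|_{L^n(B_1)}$ controlled by the data. Applying Theorem~2 in \cite{MR1005611} then yields $\|D^2 v_r\|_{L^n(B_{1/2})}\le C$, with $C$ independent of $r$ and $x_0$. On the rescaled non-contact set $A_r(x_0)\cap B_{1/2}=(B_{r/2}(x_0)\setminus\Omega-x_0)/r$ the identity $D^2 v_r(y)=D^2u(ry+x_0)-P_r(x_0)$ together with $|D^2u(ry+x_0)|\le K$ gives $|D^2 v_r(y)|\ge|P_r(x_0)|-K$ whenever $|P_r(x_0)|>K$. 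Integrating this pointwise inequality and using the scaling identity $|A_{r/2}(x_0)|=2^n|A_r(x_0)\cap B_{1/2}|$ (obtained by the change of variables $z=ry+x_0$ versus $z=(r/2)y'+x_0$) produces the absolute decay estimate
\[
|A_{r/2}(x_0)|\le\frac{2^n C^n}{(|P_r(x_0)|-K)^n}.
\]

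Converting this absolute bound into the desired ratio bound $|A_{r/2}(x_0)|\le|A_r(x_0)|/2^n$ is the main obstacle, since the estimate above does not reference $|A_r(x_0)|$. I would resolve this by a dichotomy on how concentrated $A_r(x_0)$ is inside $B_{1/2}$. If $|A_r(x_0)\cap B_{1/2}|\le|A_r(x_0)|/4^n$, the scaling identity immediately yields $|A_{r/2}(x_0)|\le|A_r(x_0)|/2^n$. In the opposite regime one has $|A_r(x_0)|<4^n|A_r(x_0)\cap B_{1/2}|$, and combining this with the absolute estimate forces $|A_r(x_0)|$ itself to be small relative to $(|P_r(x_0)|-K)^{-n}$. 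The remaining task is to exclude this second regime for $|P_r(x_0)|>M$: one invokes a separate interior regularity argument for $v_r$ (exploiting the concavity of $G$ and the Evans--Krylov theorem, applicable when the non-contact portion of $B_1$ is of small measure so that the right-hand side $\phi+h$ is essentially H\"older continuous up to a controlled perturbation) to extract a universal a priori bound on $|P_r(x_0)|$ depending only on $K$, $\|f\|_{L^\infty(B_1)}$, $n$, and $\lambda_0$. Choosing $M$ strictly larger than this universal threshold rules out the second case of the dichotomy and completes the proof.
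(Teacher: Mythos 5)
Your plan correctly gets an absolute bound $|A_{r/2}(x_0)|\le 2^nC^n/(|P_r(x_0)|-K)^n$ from the $W^{2,n}$ estimate of Caffarelli applied to $u_{r,x_0}$, and you correctly identify that converting this to the ratio bound $|A_{r/2}|\le |A_r|/2^n$ is the real difficulty. But the dichotomy you propose does not close the gap, and the reason is structural, not technical.

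In the second regime of your dichotomy you know only that $|A_r(x_0)|$ is small (of order $(|P_r(x_0)|-K)^{-n}$). To finish, you claim you can rule out $|P_r(x_0)|$ being large "by Evans--Krylov, since the right-hand side is essentially H\"older continuous when the non-contact portion has small measure." This step fails: the right-hand side of the equation for $u_{r,x_0}$ is $\phi+h$ where $\phi$ contains a hard jump $\chi_{A_r(x_0)}$. Smallness of $|A_r(x_0)|$ makes the $L^n$ norm of this term small, but it does not make it H\"older or even $VMO$, so Evans--Krylov/Schauder theory gives no pointwise bound on $D^2u_{r,x_0}$. In fact any argument that, from small $|A_r|$, produces a uniform bound on $|P_r(x_0)|$ is essentially the $C^{1,1}$ estimate you are trying to establish, so the reasoning risks being circular.

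The paper's proof avoids this by inserting a decomposition that is the true missing idea. It solves an auxiliary Dirichlet problem $\tilde G(D^2v_{r,x_0},y)=f(ry+x_0)-f(x_0)$ in $B_1$ with $v_{r,x_0}=u_{r,x_0}$ on $\partial B_1$ (no free boundary), and sets $w_{r,x_0}:=u_{r,x_0}-v_{r,x_0}$. Then $w_{r,x_0}$ sits between Pucci operators with right-hand side $\tilde\phi\,\chi_{A_r(x_0)}$ supported exactly on the rescaled coincidence set. The ABP estimate therefore gives $\|w_{r,x_0}\|_{L^\infty}\le C|A_r(x_0)|^{1/n}$, and a $W^{2,2n}$ estimate then gives
\[
\int_{B_{1/2}}|D^2w_{r,x_0}|^{2n}\le C|A_r(x_0)|.
\]
This quantitative dependence on $|A_r(x_0)|$ of the $L^{2n}$ norm of $D^2w_{r,x_0}$ is precisely what your single-function estimate cannot see: the $W^{2,n}$ bound $\|D^2u_{r,x_0}\|_{L^n(B_{1/2})}\le C$ does not degrade as $|A_r(x_0)|\to 0$ because $\phi$ has an $L^\infty$ bound that is uniform in $r$, not a bound that vanishes with $|A_r(x_0)|$. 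Since $v_{r,x_0}$ has uniformly bounded Hessian (by interior $C^{2,\alpha}$ estimates plus the maximum principle) and $|D^2u(ry+x_0)|\le K$ on $A_r(x_0)$, writing $P_r(x_0)=D^2u(ry+x_0)-D^2v_{r,x_0}-D^2w_{r,x_0}$ on $A_r(x_0)\cap B_{1/2}$ immediately yields
\[
|A_r(x_0)\cap B_{1/2}|\,|P_r(x_0)|^{2n}\le C|A_r(x_0)|,
\]
and the scaling identity $|A_{r/2}(x_0)|=2^n|A_r(x_0)\cap B_{1/2}|$ then gives the ratio bound directly, with $M=(4^nC)^{1/(2n)}$ — no dichotomy needed. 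To repair your argument along your current lines you would have to introduce this splitting (or some device equivalent to it) so that the estimate on the "singular part" of the Hessian carries a factor of $|A_r(x_0)|$.
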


\begin{proof}
Let $u_{r,x_{0}}(y):=\frac{u(ry+x_{0})}{r^{2}}-\frac{1}{2}\langle P_{r}(x_{0})y,y\rangle$
and $$\tilde G(Q,y):=F(P_{r}(x_{0})+Q,ry+x_0)-f(x_{0}).$$ Remark 1 below Theorem 8.1 in \cite{MR1351007} implies the existence of a solution $v_{r,x_{0}}$ to the equation 
\begin{equation} 
\begin{cases}
\tilde G(D^{2}v_{r,x_{0}}(y),y)=f(ry+x_{0})-f(x_{0}) & \text{in }B_{1},\\
v_{r,x_{0}}=u_{r,x_{0}} & \text{on }\partial B_{1};
\end{cases}\label{eq:r-nofreebdry}
\end{equation}
set $$w_{r,x_{0}}:=u_{r,x_{0}}-v_{r,x_{0}},$$ and note that by definition $$\tilde G(D^{2}u_{r,x_{0}}(y),y)=F(D^2 u(ry+x_0),ry+x_0)-f(x_0).$$ Therefore, 
\begin{align*}
\tilde G(&D^{2}u_{r,x_{0}}(y),y)-\tilde G(D^{2}v_{r,x_{0}}(y),y)\\
&=\big(F(D^2u(ry+x_0),ry+x_0)-f(x_0)\big)-(f(ry+x_{0})-f(x_{0}))\\
&=\big(F(D^2u(ry+x_0), ry+x_0)-f(ry+x_{0})\big)\chi_{A_r(x_0)}\\
&=:\tilde \phi(y)\chi_{A_r(x_0)},
\end{align*} 
where $\tilde \phi \in L^\infty(B_1)$. Combining this information with \eqref{hyp:uniformellipticity} and the definition of $\tilde G$ yields 
\begin{align*}
\mathcal{P}^{-}(D^{2}w_{r,x_{0}}(y)) & \le \tilde G(D^{2}u_{r,x_{0}}(y),y)- \tilde G(D^{2}v_{r,x_{0}}(y),y)\\
 & =\tilde \phi(y) \chi_{A_{r}(x_{0})}\le\mathcal{P}^{+}(D^{2}w_{r,x_{0}}).
\end{align*}
Since $\tilde \phi \in L^\infty(B_1)$ with bounds depending only on the given data and $A_r(x_0)$ is relatively closed in $B_1$ (recall that $\Omega$ is open), we may apply the ABP estimate to obtain 
\begin{equation}
\Vert w_{r,x_{0}}\Vert_{L^{\infty}(B_{1})}\le C(K,f,n,\lambda_{0},\lambda_{1})|A_{r}(x_{0})|^{1/n}.\label{eq:ABPconsequence}
\end{equation}
Since \ref{eq:H4} holds, we may combine Remark 3 following Theorem 8.1 in \cite{MR1351007} with a standard covering argument to deduce
\begin{equation*}
\|D^{2}v_{r,x_{0}}\|_{C^{0,\alpha}(\overline{B}_{4/5})}\le C(\|v_{r,x_{0}}\|_{L^\infty(B_{4/5})}+C);
\end{equation*}
now by applying Lemma \ref{lemo} and the maximum principle for \eqref{eq:r-nofreebdry} we obtain 
\begin{align} \label{max}
\|v_{r,x_{0}}\|_{L^\infty(B_{4/5})} &\le\|v_{r,x_{0}}\|_{L^\infty(\partial B_1)}+2C_0||f||_{L^\infty(B_1)}  \nonumber \\
&=\|u_{r,x_{0}}\|_{L^\infty(\partial B_1)}+ 2C_0||f||_{L^\infty(B_1)}\le C.
\end{align}
In particular, since $f \in C^0(B_1)$, $H(M,y):=\tilde G(D^{2}v_{r,x_0}(y)+M,y)+f(x_{0})-f(ry+x_{0})$
is continuous in $y$ on $\overline{B}_{4/5}$ and has the same ellipticity constants as $F$ (note also that $H(0,y)=0$ in $B_{4/5}$). Moreover, $w_{r,x_{0}}$ solves the equation $$H(D^{2}w_{r,x_0}(y),y)=\phi(y)\chi_{A_{r}(x_{0})} \hskip .2in y \in B_{4/5},$$ where $\phi$ has uniform bounds. 
The operator $H$ also has interior $C^{1,1}$ estimates since it is concave. Thus, by applying Theorem 1 in \cite{MR1005611} (cf. Theorem 7.1 in \cite{MR1351007}) and
a standard covering argument (again utilizing \ref{eq:H4}), we obtain $w_{r,x_{0}}\in W^{2,p}(B_{1/2})$ for any $p>n$; selecting $p=2n$, it follows that
\begin{align}
\int_{B_{1/2}}|D^{2}w_{r,x_{0}}(y)|^{2n}\,dy & \le C(\|w_{r,x_{0}}\|_{L^{\infty}(B_{3/4})}+\|\phi \chi_{A_{r}(x_{0})}\|_{L^{2n}(B_{3/4})})^{2n} \nonumber\\
 & \le C|A_{r}(x_{0})|, \label{eq:W2n-ineq}
\end{align}
(note that the last inequality follows from (\ref{eq:ABPconsequence}) and the fact that $|A_r(x_0)| \leq |B_1|$). Since $|D^{2}u|\le K$ a.e. in $A_{r}(x_{0})$ and $$P_{r}(x_{0})=D^{2}u(ry+x_{0})-D^{2}v_{r,x_{0}}(y)-D^{2}w_{r,x_{0}}(y),$$ by utilizing \eqref{max} and \eqref{eq:W2n-ineq} we obtain  
\begin{align*}
 & |A_{r}(x_{0})\cap B_{1/2}||P_{r}(x_{0})|^{2n} =  \int_{A_{r}(x_{0})\cap B_{1/2}}|P_{r}(x_{0})|^{2n}dy\\
= & \int_{A_{r}(x_{0})\cap B_{1/2}}|D^{2}u(ry+x_{0})-D^{2}v_{r,x_{0}}(y)-D^{2}w_{r,x_{0}}(y)|^{2n}dy\\
\le & C\int_{A_{r}(x_{0})\cap B_{1/2}}|D^{2}v_{r,x_{0}}|^{2n}+|D^{2}w_{r,x_{0}}|^{2n}+|D^{2}u(ry+x_{0})|^{2n}dy\\
\le & C(|A_{r}(x_{0})\cap B_{1/2}|\|D^{2}v_{r,x_{0}}\|_{L^{\infty}(B_{1/2})}^{2n}+C|A_{r}(x_{0})|+K^{2n}|A_{r}(x_{0})\cap B_{1/2}|)\\
\le & C(|A_{r}(x_{0})\cap B_{1/2}|+|A_{r}(x_{0})|)\le C|A_{r}(x_{0})|.
\end{align*}
Next note that 
\begin{align*}
A_{r/2}(x_0) & = B_{1}\backslash((\Omega-x_{0})/(r/2)) =2 (B_{1/2}\backslash((\Omega-x_{0})/r))\\
&=2 (B_{1/2}\cap B_1 \backslash((\Omega-x_{0})/r))= 2(B_{1/2}\cap A_{r}(x_0));
\end{align*}
thus, if $|P_{r}(x_{0})|\ge(4^{n}C)^{\frac{1}{2n}}$,  
\begin{align*}
|A_{r/2}(x_{0})||P_{r}(x_{0})|^{2n} & =2^{n}|A_{r}(x_{0})\cap B_{1/2}||P_{r}(x_{0})|^{2n}\le2^{n}C|A_{r}(x_{0})|\\
 & \le\frac{|P_{r}(x_{0})|^{2n}}{2^{n}}|A_{r}(x_{0})|,
\end{align*}
which immediately gives the conclusion of the lemma.
\end{proof}
In other words, Lemma \ref{lem:fastdecay} says that the free boundary has a cusp-like behavior at $x_0$ if $|P_r(x_0)|$ is large, see Figure \ref{fig:Ar}. We now have all the ingredients to prove interior $C^{1,1}$ regularity of the solution $u$.  
\begin{figure}[ht]
\begin{centering}
\includegraphics[width=10cm]{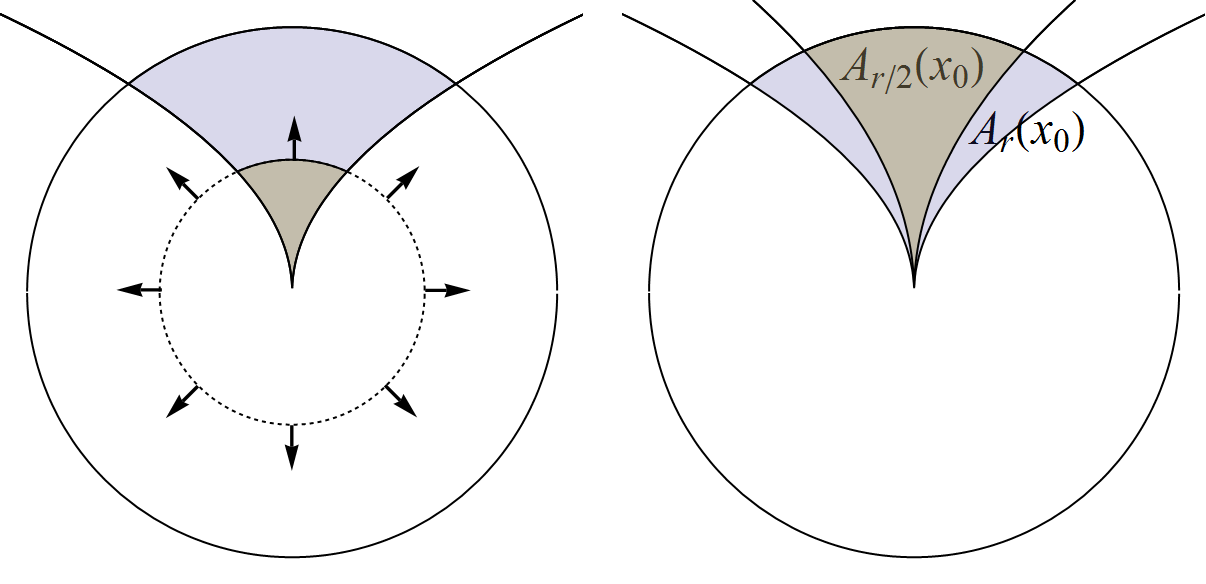}
\end{centering}
\caption{$B_r\backslash\Omega$ and $B_{r/2}\backslash \Omega$ are placed on the same scale and generate $A_r(x_0)$ and $A_{r/2}(x_0)$, respectively. Here, $x_0$ is the tip of a cusp.\label{fig:Ar}}
\end{figure}

\begin{proof}[Proof of Theorem \ref{c11}]
By assumption, $|D^{2}u|$ is bounded a.e. in $B_{1}\backslash\Omega$. Therefore, consider a point $x_{0}\in\overline{\Omega}\cap B_{1/2}$ which is a Lebesgue point
for $D^{2}u$ and where $u$ is twice differentiable (such points differ from $\Omega$ by a set of measure zero). Take $M>0$ as in Lemma \ref{lem:fastdecay}. If $\liminf_{k\to\infty}|P_{2^{-k}}(x_{0})|\le3M$,
then Lemma \ref{lem:r2bound} implies
\[
|D^{2}u(x_{0})|\le\liminf_{k\to\infty}\sup_{B_{2^{-k}}(x_{0})}\frac{2|u|}{(2^{-k})^{2}}\le2(C_{1}+3M),
\]
(recall that we may assume without loss of generality that $u(x_{0})=|\nabla u(x_{0})|=0$). In the case $\liminf_{k\to\infty}P_{2^{-k}}(x_{0})>3M$, let $k_{0}\ge3$
be such that $|P_{2^{-k_{0}-1}}|\le2M$ and $|P_{2^{-k}}|\ge2M$ for
all $k\ge k_{0}$ ($k_{0}$ can be assumed to exist by taking $M$
bigger if necessary). Then Corollary \ref{lem:BMO-consequence} implies
$|P_{2^{-k_{0}}}(x_{0})|\le2M+C_{0}$. Now let $$\overline{u}_{0}(y):=4^{k_{0}}u(2^{-k_{0}}y+x_{0})-\frac{1}{2}\langle P_{2^{-k_{0}}}(x_{0})y,y\rangle$$ and $$\tilde F(Q,y):=F(P_{2^{-k_{0}}}(x_{0})+Q, 2^{-k_{0}}y+x_{0})-f(2^{-k_{0}}y+x_{0});$$ note that $\tilde F(0,0)=0$ by the definition of $P_{2^{-k_{0}}}(x_{0})$ and $\overline{u}_{0}(y)$ solves the equation
\begin{equation} \label{eq:last}
\tilde F(D^{2}u(y),y)=\tilde f(y) \hskip .2in y \in B_{1}, 
\end{equation}
where 
$$\tilde f(y):= g(y)\chi_{A_{2^{-k_{0}}}(x_{0})},$$
and
$$g(y):=F(D^2 u(2^{-k_{0}}y+x_{0}),2^{-k_{0}}y+x_{0})-f(2^{-k_{0}}y+x_{0}) \in L^\infty(B_1),$$
with uniform bounds. 
Our goal is to apply Theorem 3 in \cite{MR1005611} (cf. Theorem 8.1 in \cite{MR1351007}) to \eqref{eq:last}; thus, we verify the required conditions: Lemma \ref{lem:fastdecay} implies
\[
|A_{2^{-k_{0}-j}}(x_{0})|\le2^{-jn}|A_{2^{-k_{0}}}(x_{0})|,\qquad\forall j\ge0,
\]
from which it follows that
\[
\fint_{B_{r}}|g\chi_{A_{2^{-k_{0}}}(x_{0})}|^{n}\le C\fint_{B_{r}}|\chi_{A_{2^{-k_{0}}}(x_{0})}|^{n}\le Cr^{n},\qquad\forall r\in(0,1/8);
\]
indeed, take $j$ so that $2^{-j-1}<r\le2^{-j}$ and let $A_{r}(x_{0})$
be denoted by $A_{r}$ so that 
\begin{align*}
\fint_{B_{r}}|\chi_{A_{2^{-k_{0}}}(x_{0})}|^{n} & \le\frac{|A_{2^{-k_{0}}}\cap B_{r}|}{2^{n(-j-\text{1})}}=2^{n}2^{jn}\cdot2^{-n}|2(A_{2^{-k_{0}}}\cap B_{r})|\\
 & =2^{n}2^{jn}\cdot2^{-n}|2(A_{2^{-k_{0}}}\cap B_{1/2}\cap B_{r})|\\
 & =2^{n}2^{jn}\cdot2^{-n}|2(A_{2^{-k_{0}}}\cap B_{1/2})\cap B_{2r}|\\
 & =2^{n}2^{jn}\cdot2^{-n}|A_{2^{-k_{0}-1}}\cap B_{2r}|\\
 &\le\cdots\\
 & \le2^{n}2^{jn}\cdot2^{-jn}|A_{2^{-k_{0}-j}}\cap B_{2^{j}r}|\\
 & \le2^{n}|A_{2^{-k_{0}-j}}|\le2^{n}\cdot2^{-jn}|A_{2^{-k_{0}}}|\le C2^{2n}(2^{-j-1})^{n}\\
 & \le C2^{2n}r^{n}.
\end{align*}
We are left with verifying the condition on the oscillation of $\tilde F$. To this aim, note that one may replace $\beta_{\tilde F}(y)$ by $\tilde \beta_{\tilde F}(y)$ (see e.g. (8.3) of Theorem 8.1 in \cite{MR1351007}). With this in mind, and for $P=P_{2^{-k_0}}(x_0)$,  
\begin{align*}
&\tilde \beta_{\tilde F}(y)=\sup_{Q \in \mathcal{S}} \frac{\Big |\tilde F(Q,y)-\tilde F(Q,0)\Big|}{|Q|+1}\\
&=\sup_{Q\in \mathcal{S}} \frac{\big| F(P+Q, \frac{y}{2^{k_{0}}}+x_{0})-f(\frac{y}{2^{k_{0}}}+x_{0}) -(F(P+Q,x_{0})-f(x_{0}))\big|}{|Q|+1}\\
&=\sup_{Q\in \mathcal{S}} \frac{\big| F(P+Q, \frac{y}{2^{k_{0}}}+x_{0})-F(P+Q,x_{0})+(f(x_0)-f(\frac{y}{2^{k_{0}}}+x_{0}))\big|}{|Q|+1}\\
&\leq C|y|^\alpha,
\end{align*}
(the last inequality follows from \ref{eq:H4}, the H\"older continuity of $f$, and the boundedness of $P$). Thus, the condition on the oscillation of $\tilde F$ is verified. Therefore $\overline{u}_{0}$ is $C^{2,\alpha}$ at the origin with the bound
\[
|D^{2}\overline{u}_{0}(0)|\le C
\]
for a constant $C$. This in turn implies
\[
|D^{2}u(x_{0})|\le|D^{2}\overline{u}_{0}(0)|+|P_{2^{-k_{0}}}(x_{0})|\le C,
\]
and we conclude.
\end{proof}

\section{Free boundary regularity} \label{frebd}

The aim in this section is to prove free boundary regularity for \eqref{eq:main}. In general the free boundary may develop singularities, see e.g. Schaeffer \cite{MR0516201}. Nevertheless, under a uniform thickness assumption and if $f\ge c>0$, then the free boundary is $C^1$.

\subsection{Non-degeneracy and classification of blow-ups}

The first step in the free boundary analysis is non-degeneracy (i.e. at least quadratic growth) of the solution near a free boundary point. In general this fails, even in the one-dimensional problem $u''=\chi_{\{u'' \neq 0\}}$ (see e.g. \cite[\S 3.1]{2012arXiv1212.5809F}). However, for $\{|\nabla u| \neq 0\} \subset \Omega$, non-degeneracy follows from a uniform positivity assumption on the right hand side: if $0<c\le \inf_{x \in B_1}f(x)$, then by letting $v(x):=u(x)-\frac{c|x-x_{0}|^{2}}{2n\lambda_{1}}$, one may check that $v$ is a subsolution for $F$ in $\Omega \cap B_1$ and apply the argument in \cite[Lemma 3.1]{2012arXiv1212.5809F}.    

\begin{lem}[Non-degeneracy] \label{nondeg} 
Suppose $0<c\le \inf_{x \in B_1}f(x)$ and let $u$ be a $W^{2,n}(B_{1})$ solution to \eqref{eq:main}. If $\{|\nabla u| \neq 0\} \subset \Omega$ and $x_{0}\in\overline{\Omega}\cap B_{1/2}$, then for any $r>0$ such that $B_{r}(x_{0})\Subset B_{1}$, 
\[
\sup_{\partial B_{r}(x_{0})}u\ge u(x_{0})+\frac{c}{2n\lambda_{1}}r^{2}. 
\]
\end{lem}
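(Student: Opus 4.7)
The plan is to adapt the subsolution comparison argument of \cite[Lemma 3.1]{2012arXiv1212.5809F} to the present $x$-dependent setting. Introduce the comparison function
\begin{equation*}
v(x) := u(x) - \frac{c\,|x-x_0|^2}{2n\lambda_1},
\end{equation*}
so that $v(x_0)=u(x_0)$ and $D^2 v = D^2 u - \tfrac{c}{n\lambda_1}\,\iid$ a.e.\ in $\Omega\cap B_1$. By \ref{hyp:uniformellipticity},
\begin{equation*}
F(D^2 u,x) - F(D^2 v,x) \le \mathcal{P}^{+}\!\left(\tfrac{c}{n\lambda_1}\,\iid\right) = c,
\end{equation*}
so $F(D^2 v,x)\ge f(x)-c\ge 0$ a.e.\ in $\Omega\cap B_1$. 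Combined with \ref{hyp:0@0} and one more application of \ref{hyp:uniformellipticity}, this upgrades to the Pucci subsolution inequality $\mathcal{P}^{+}(D^2 v)\ge 0$ on $\Omega\cap B_1$, so the classical maximum principle is available.

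The next step is to apply the weak maximum principle to $v$ on the connected component $U$ of $\Omega\cap B_r(x_0)$ containing $x_0$ (when $x_0\in\Omega$; the boundary case $x_0\in\partial\Omega$ follows by continuity after letting an approximating sequence of interior points converge to $x_0$). This produces a point $y\in\partial U$ with $v(y)\ge v(x_0)=u(x_0)$. Since $\partial U\subset\partial\Omega\cup\partial B_r(x_0)$, one of two things occurs: either $y\in\partial B_r(x_0)$, in which case rearranging $v(y)\ge u(x_0)$ gives $u(y)\ge u(x_0)+\tfrac{cr^2}{2n\lambda_1}$ and the lemma is proved, or $y\in\partial\Omega\cap\overline{B_r(x_0)}$. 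In the latter case, $y\in B_1\setminus\Omega$ combined with the hypothesis $\{|\nabla u|\ne 0\}\subset\Omega$ forces $\nabla u(y)=0$, so after the harmless normalization $u\mapsto u-u(y)$ the entire setup is recovered at $y$, and one can run the same argument on the shrunken ball $B_{r-|y-x_0|}(y)\subset B_r(x_0)$; iterating this step yields the desired growth on $\partial B_r(x_0)$, exactly as in the cited lemma.

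The main obstacle I anticipate is precisely the free-boundary case: ruling out that the maximum of $v$ is perpetually trapped inside $\partial\Omega\cap B_r(x_0)$. This is where $\{|\nabla u|\ne 0\}\subset\Omega$ is essential, since it guarantees that at every such candidate free-boundary extremum $\nabla u$ vanishes, which both permits the iterative reduction to a fresh application of the comparison principle and prevents an internal extremum of $v$ from defeating the bound that $\partial B_r(x_0)$ is supposed to provide. Compactness of $\overline{B_r(x_0)}$ and the $C^{1,1}$ regularity supplied by Theorem \ref{c11} then close the argument.
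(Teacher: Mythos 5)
The setup is right: your comparison function $v(x) = u(x) - \tfrac{c}{2n\lambda_1}|x-x_0|^2$, the verification via \ref{hyp:uniformellipticity} and \ref{hyp:0@0} that $\mathcal{P}^+(D^2 v)\ge 0$ in $\Omega$, and the reduction to the maximum principle all match the paper's intent. The gap is in your Case 2. The proposed iteration on the shrunken balls $B_{r-|y-x_0|}(y)$ does not deliver the conclusion: each step produces a new free-boundary point strictly inside $B_r(x_0)$, the iteration has no reason to terminate on $\partial B_r(x_0)$, and even if the path $x_0 \to y_1 \to y_2 \to \cdots$ reached $\partial B_r(x_0)$, you would only accumulate $\sum_j |y_{j+1}-y_j|^2$, which is strictly less than $(\sum_j |y_{j+1}-y_j|)^2 \le r^2$, so the constant $\tfrac{c}{2n\lambda_1}r^2$ would not be recovered. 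The iteration also requires restarting at free-boundary points $y_j\notin\Omega$, which forces a fresh approximation argument at every step.

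The argument the paper is pointing to (Figalli--Shahgholian, Lemma 3.1) is not iterative; it rules out the free-boundary alternative outright by exploiting exactly the gradient information you noted but then did not fully use. On $\partial\Omega$ one has $\nabla u = 0$ (since $\{|\nabla u|\ne 0\}\subset\Omega$), hence $\nabla v(y) = -\tfrac{c}{n\lambda_1}(y-x_0)\ne 0$ for every free-boundary point $y\ne x_0$. Argue by contradiction: if $\sup_{\partial B_r(x_0)} v < v(x_0)$, then the maximum of $v$ over $\overline{B_r(x_0)}$ is attained at an interior point $z^*$, so $\nabla v(z^*)=0$. If $z^*\notin\Omega$, this contradicts $\nabla v(z^*)\ne 0$ (note $z^*\ne x_0$ because $x_0\in\Omega$). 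If $z^*\in\Omega$, the strong maximum principle forces $v$ to be constant on the component $V$ of $\Omega\cap B_r(x_0)$ containing $z^*$; since $v<v(z^*)$ on $\partial B_r(x_0)$ we get $\partial V\subset\partial\Omega$, and continuity of $\nabla v$ yields $\nabla v\equiv 0$ on $\overline V$, again contradicting $\nabla v\ne 0$ on $\partial V\setminus\{x_0\}$. (The boundary case $\max v = 0 = v(x_0)$ is handled the same way with the strong maximum principle applied at $x_0$.) So the nonvanishing of $\nabla v$ on the free boundary is not a license to iterate but the very mechanism that eliminates Case~2.
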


\noindent The previous result immediately implies a linear growth estimate on the gradient (this is usually referred to as non-degeneracy of the gradient). 
\begin{cor}
Suppose $0<c\le \inf_{x \in B_1}f(x)$ and let $u$ be a $W^{2,n}(B_{1})$ solution to \eqref{eq:main}. If $ \{|\nabla u| \neq 0\} \subset \Omega$ and $x_{0}\in\overline{\Omega}\cap B_{1/2}$, then for any $r>0$ such that $B_{r}(x_{0})\Subset B_{1}$, 
\[
\sup_{B_{r}(x_{0})}|\nabla u|\ge \frac{c}{4n\lambda_{1}}r.
\]
\end{cor}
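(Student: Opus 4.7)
The plan is to combine the non-degeneracy estimate from Lemma \ref{nondeg} with the fundamental theorem of calculus along a radial segment. Since Theorem \ref{c11} provides $u \in C^{1,1}(B_{1/2})$ (and in any case $u \in C^{1,\alpha}(B_1)$ from the $W^{2,n}$ assumption), $\nabla u$ is continuous, so all integrals below are well-defined and the sup of $u$ on $\partial B_r(x_0)$ is attained.

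More concretely, fix $r>0$ with $B_r(x_0) \Subset B_1$ and pick $y \in \partial B_r(x_0)$ with
\[
u(y)=\sup_{\partial B_r(x_0)} u.
\]
By Lemma \ref{nondeg}, $u(y) - u(x_0) \ge \tfrac{c}{2n\lambda_1} r^2$. On the other hand, writing
\[
u(y)-u(x_0)=\int_0^1 \nabla u(x_0+t(y-x_0))\cdot(y-x_0)\,dt,
\]
and applying the Cauchy--Schwarz inequality together with $|y-x_0|=r$, one obtains
\[
u(y)-u(x_0)\le r\sup_{B_r(x_0)}|\nabla u|.
\]
Combining the two inequalities yields $\sup_{B_r(x_0)} |\nabla u|\ge \tfrac{c}{2n\lambda_1}\, r$, which is stronger than the claimed bound $\tfrac{c}{4n\lambda_1}\, r$.

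There is essentially no obstacle here: the only point requiring a tiny bit of care is the justification for applying the fundamental theorem of calculus, which is immediate from the $C^1$ regularity of $u$, and the fact that the radial segment from $x_0$ to $y$ lies entirely in $\overline{B_r(x_0)}$, so the supremum on the right-hand side is taken over the correct set. The looser constant $\tfrac{c}{4n\lambda_1}$ in the statement simply gives a small safety margin (e.g.\ one can alternatively argue on $B_{r/2}$ or pass to an interior ball to avoid sup versus max issues, which loses at most a factor of two).
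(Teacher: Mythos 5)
Your proof is correct and follows essentially the same route as the paper: apply Lemma \ref{nondeg} and then bound $u(y)-u(x_0)$ by $r\sup_{B_r(x_0)}|\nabla u|$ via the mean value inequality (the paper states this inequality directly rather than spelling out the integral). The paper loosens the constant to $\tfrac{c}{4n\lambda_1}$ by taking a point where $u(x)-u(x_0)\ge\tfrac{c}{4n\lambda_1}r^2$ rather than the maximizing point; as you observe, since $\nabla u$ is continuous the sup is attained and the sharper constant $\tfrac{c}{2n\lambda_1}$ holds.
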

\begin{proof}
From the non-degeneracy,
\[
\sup_{\partial B_{r}(x_{0})}u\ge u(x_{0})+\frac{c}{2n\lambda_{1}}r^{2}.
\]
Therefore there is a point $x\in\partial B_{r}(x_{0})$ such that
$u(x)-u(x_{0})\ge \frac{c}{4n\lambda_{1}}r^{2}$ . Also,
\[
u(x)-u(x_{0})\le\sup_{B_{r}(x_0)}|\nabla u||x-x_{0}|=\sup_{B_{r}(x_0)}|\nabla u|r,
\]
i.e., $\sup_{B_{r}(x_0)}|\nabla u|\ge \frac{c}{4n\lambda_{1}}r$. \end{proof}

 Non-degeneracy of the gradient and the optimal regularity result of Theorem \ref{c11} imply the porosity of the free boundary inside $B_{1/4}$, i.e. there is a $0<\delta<1$ such that every ball $B_r(x)$ contains a smaller ball $B_{\delta r}(y)$ for which $B_{\delta r}(y)\subset B_r(x)\backslash(\partial\Omega\cap B_{1/4})$.
 
\begin{lem}[Porosity of the free boundary]
 Suppose $0<c\le \inf_{x \in B_1}f(x)$ and let $u$ be a $W^{2,n}(B_{1})$ solution to \eqref{eq:main}. If $\{|\nabla u| \neq 0\} \subset \Omega$, then $\partial\Omega \cap B_{1/4}$ is porous.
\end{lem}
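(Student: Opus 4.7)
The plan is to combine the linear lower bound on $\sup|\nabla u|$ from the preceding corollary with the Lipschitz regularity of $\nabla u$ provided by Theorem \ref{c11}, using the standing hypothesis $\{|\nabla u|\neq 0\}\subset \Omega$ to upgrade a pointwise lower bound on $|\nabla u|$ to a ball-inclusion in $\Omega$.

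First I would establish porosity at a free boundary point $x\in \partial\Omega\cap B_{1/4}$. For $r>0$ with $B_r(x)\Subset B_1$, the corollary above, applied on $B_{r/2}(x)$, produces some $y\in \overline{B_{r/2}(x)}$ with $|\nabla u(y)|\ge \frac{cr}{8n\lambda_1}$. Let $\bar C$ denote the Lipschitz constant of $\nabla u$ on $B_{1/2}$ furnished by Theorem \ref{c11}, and set $\rho:=\frac{cr}{16\bar C n\lambda_1}$. For every $z\in B_\rho(y)$,
\[
|\nabla u(z)|\ge |\nabla u(y)|-\bar C \rho\ge \frac{cr}{16n\lambda_1}>0,
\]
so $B_\rho(y)\subset \{|\nabla u|\neq 0\}\subset \Omega$, and in particular $B_\rho(y)\cap\partial\Omega=\emptyset$. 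After shrinking the constant, if necessary, so that $\rho\le r/2$, the triangle inequality yields $B_\rho(y)\subset B_r(x)$. This gives a porosity constant $\delta_0:=\frac{c}{16\bar C n\lambda_1}$ for balls centered on the free boundary.

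To obtain porosity of $\partial\Omega\cap B_{1/4}$ in the form stated, I would reduce a general ball $B_r(x)$ to the centered case via a standard dichotomy. Either $B_{r/2}(x)\cap(\partial\Omega\cap B_{1/4})=\emptyset$, in which case $B_{r/2}(x)$ itself serves as the desired inner ball; or there exists $x'\in B_{r/2}(x)\cap\partial\Omega\cap B_{1/4}$, in which case $B_{r/2}(x')\subset B_r(x)$, and applying the previous step to $x'$ with radius $r/2$ gives $B_{\delta_0 r/2}(y)\subset B_{r/2}(x')\subset B_r(x)$ disjoint from $\partial\Omega$. Setting $\delta:=\min\{\delta_0/2,\,1/2\}$ yields a porosity constant depending only on the given data.

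The core mechanism here is a one-step consequence of Lipschitz continuity of $\nabla u$ combined with the linear non-degeneracy of the gradient; consequently there is no genuine obstacle, only the bookkeeping step of passing from porosity at free boundary points to porosity of the free boundary as a subset, which is the routine doubling-type argument indicated above.
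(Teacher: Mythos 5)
Your proof is correct and follows essentially the same route as the paper: apply non-degeneracy of the gradient to produce a point $y$ near the free boundary point with $|\nabla u(y)|\gtrsim r$, then use the Lipschitz bound on $\nabla u$ from Theorem~\ref{c11} to show a ball $B_{\delta r}(y)$ lies in $\{|\nabla u|\neq 0\}\subset\Omega$. The only difference is that you spell out the routine dichotomy reducing general balls to balls centered on $\partial\Omega\cap B_{1/4}$, whereas the paper carries out the argument only for centered balls and leaves this standard reduction implicit.
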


\begin{proof}
Let $x_{0}\in\partial\Omega\cap B_{1/4}$ and $B_{r}(x_{0})\Subset B_{1/2}$. From the non-degeneracy of the gradient,
there is a point $x\in\overline{B_{r/2}(x_0)}$ so that
\[
|\nabla u(x)|\ge Cr.
\]
Let $\bar C$ be the constant from Theorem \ref{c11} and choose $0<\delta \leq \min\{\frac{C}{2\bar C}, 1/2\}$. If $y \in B_{\delta r}(x)$, then  
\begin{align*}
|\nabla u(y)| & \ge|\nabla u(x)|-|\nabla u(y)-\nabla u(x)|\ge Cr-\|D^{2}u\|_{L^{\infty}(B_{1/2})}|x-y|\\
 & \ge Cr-\|D^{2}u\|_{L^{\infty}(B_{1/2})}\delta r\ge\frac{C}{2}r.
\end{align*}
In particular, $y \in \Omega$ and so $B_{\delta r}(x)\subset B_{r}(x_{0})\cap\Omega\subset B_{r}(x_{0})\backslash(\partial\Omega\cap B_{1/4})$.
\end{proof}

A well known consequence of the porosity is the Lebesgue negligibility of the free boundary, see e.g. \cite{PSU}.

\begin{cor}
Suppose $0<c\le \inf_{x \in B_1}f(x)$ and let $u$ be a $W^{2,n}(B_{1})$ solution to \eqref{eq:main}. If $\{|\nabla u| \neq 0\} \subset \Omega$, then $\partial\Omega$ has Lebesgue measure zero in $B_{1/4}$.
\end{cor}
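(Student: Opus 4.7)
The plan is to deduce the corollary directly from the preceding porosity lemma, combined with the classical fact that every porous subset of $\mathbb{R}^n$ has zero Lebesgue measure (and in fact Hausdorff dimension strictly less than $n$).

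The argument I would write goes as follows. Set $E := \partial\Omega \cap B_{1/4}$, which is Lebesgue measurable since $\partial\Omega$ is closed. By the previous lemma there exists $\delta \in (0,1/2)$, depending only on the structural data, such that for every $x \in E$ and every $r > 0$ with $B_r(x) \Subset B_{1/2}$, one can find $y = y(x,r)$ with $B_{\delta r}(y) \subset B_r(x) \setminus E$. This immediately yields
\[
\frac{|E \cap B_r(x)|}{|B_r(x)|} \le 1 - \delta^n < 1
\]
uniformly in $r$, so the upper density of $E$ at every one of its points is bounded strictly away from $1$. However, the Lebesgue differentiation theorem asserts that if $|E| > 0$, then $|E|$-almost every point of $E$ is a density point, i.e. has density equal to $1$. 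The two statements are incompatible unless $|E| = 0$, which is exactly the claim.

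No step poses a real obstacle, since porosity-implies-measure-zero is entirely classical. One could alternatively quote the sharper fact that the Hausdorff dimension of a porous set in $\mathbb{R}^n$ is bounded by $n - c\delta^n$, obtained by iterating the covering produced by porosity at geometrically decreasing scales; this gives the conclusion with measure to spare. I would therefore keep the proof very short, essentially reducing it to the density-point argument above and citing \cite{PSU} for the detailed bookkeeping, in accordance with the "well known" phrasing of the corollary.
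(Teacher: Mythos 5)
Your argument is correct and is exactly the standard proof the paper implicitly invokes by citing \cite{PSU}: the porosity lemma gives a uniform density deficit $|E\cap B_r(x)|/|B_r(x)|\le 1-\delta^n$ at every $x\in E=\partial\Omega\cap B_{1/4}$ and every small $r$, which contradicts the Lebesgue density theorem unless $|E|=0$. Since the paper offers no proof beyond the citation, your write-up simply fills in the omitted (classical) details with the same approach.
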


\begin{lem}[Blow-up]
Suppose $0<c\le \inf_{x \in B_1}f(x)$ and let $u$ be a $W^{2,n}(B_{1})$ solution to \eqref{eq:main}, and assume $f$ to be Hölder continuous. If $\{|\nabla u|\ne0\}\subset\Omega$, then for any $x_{0}\in\partial\Omega(u) \cap B_{1/4}$ there is a sequence $\{r_{j}\}$ such that $$u_{r_{j}}(y):=\frac{u(x_{0}+r_{j}y)-u(x_{0})}{r_{j}^{2}}\to u_{0}(y)$$ as $r_{j}\to 0$ in $C_{\text{loc}}^{1,\alpha}(\mathbb{R}^{n})$, and $u_{0} \in C^{1,1}(\mathbb{R}^{n})$ solves 
\[
\begin{cases}
F(D^{2}u(y),x_{0})=f(x_{0}) & \text{a.e. in }\Omega(u_{0}),\\
|D^{2}u|\le K & \text{a.e. in }\mathbb{R}^{n}\backslash\Omega(u_{0}),
\end{cases}
\]
where $\Omega(u_{0}):=\mathbb{R}^{n}\backslash\limsup(B_{1/r_{j}}((-x_0)/r_j)\backslash\Omega(u_{r_{j}}))$, $\Omega(u_{r_{j}}):=(\Omega-x_0)/r_j$. $\Omega(u_{0})$. Moreover, $\{|\nabla u_{0}|\ne0\}\subset\Omega(u_{0})$.
\end{lem}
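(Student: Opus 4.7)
The strategy is the standard blow-up analysis for free boundary problems: use the $C^{1,1}$ regularity from Theorem \ref{c11} to extract a convergent subsequence of the parabolic rescalings, pass both the PDE and the complement constraint to the limit, and verify the gradient inclusion through the hypothesis $\{|\nabla u|\ne0\}\subset\Omega$.

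\emph{Compactness.} Since $x_{0}\in\partial\Omega(u)\cap B_{1/4}$ and $\{|\nabla u|\ne0\}\subset\Omega$ with $\Omega$ open, the continuity of $\nabla u$ afforded by Theorem \ref{c11} gives $\nabla u(x_{0})=0$. For any $R>0$ and $r<1/(4R)$ we have $B_{rR}(x_{0})\subset B_{1/2}$, so $|D^{2}u_{r}(y)|=|D^{2}u(x_{0}+ry)|\le\bar C$ a.e.\ in $B_{R}$; together with $u_{r}(0)=\nabla u_{r}(0)=0$ this produces uniform $C^{1,1}(B_{R})$ bounds on the family $\{u_{r}\}$. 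By Arzel\`a--Ascoli and a diagonal extraction there exist $r_{j}\to0$ and $u_{0}\in C^{1,1}_{\text{loc}}(\mathbb{R}^{n})$ with $u_{r_{j}}\to u_{0}$ in $C^{1,\alpha}_{\text{loc}}(\mathbb{R}^{n})$ for every $\alpha\in(0,1)$, and $|D^{2}u_{0}|\le\bar C$ a.e.

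\emph{Passing to the limit in the PDE and the constraint.} The set $\Omega(u_{0})$, being the complement of a Kuratowski upper limit of closed sets, is open. For $y\in\Omega(u_{0})$ a contradiction argument (otherwise a sequence of points in $B_{1/r_{j}}((-x_{0})/r_{j})\setminus\Omega(u_{r_{j}})$ would accumulate at $y$) provides $\rho>0$ and $J$ with $B_{\rho}(y)\subset\Omega(u_{r_{j}})$ for every $j\ge J$; on $B_{\rho}(y)$ the rescaling $u_{r_{j}}$ solves $F(D^{2}u_{r_{j}}(z),x_{0}+r_{j}z)=f(x_{0}+r_{j}z)$ a.e. By \ref{eq:H4} and the H\"older continuity of $f$, $F(M,x_{0}+r_{j}z)\to F(M,x_{0})$ and $f(x_{0}+r_{j}z)\to f(x_{0})$ uniformly on compact sets of $(M,z)$. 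Since $W^{2,n}$ solutions of uniformly elliptic fully nonlinear equations are viscosity solutions, stability of viscosity solutions gives $F(D^{2}u_{0},x_{0})=f(x_{0})$ in the viscosity sense on $\Omega(u_{0})$; the concavity \ref{hyp:concavity} and Evans--Krylov then upgrade this to an a.e.\ identity. The constraint $|D^{2}u_{r_{j}}|\le K$ on $B_{1/r_{j}}((-x_{0})/r_{j})\setminus\Omega(u_{r_{j}})$ passes to the weak-$\ast$ limit in $L^{\infty}_{\text{loc}}$, yielding $|D^{2}u_{0}|\le K$ a.e.\ on $\mathbb{R}^{n}\setminus\Omega(u_{0})$.

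\emph{Gradient inclusion and main obstacle.} If $y\notin\Omega(u_{0})$, then by definition of the Kuratowski limsup there exist $j_{k}\to\infty$ and $y_{j_{k}}\to y$ with $y_{j_{k}}\in B_{1/r_{j_{k}}}((-x_{0})/r_{j_{k}})\setminus\Omega(u_{r_{j_{k}}})$, so $x_{0}+r_{j_{k}}y_{j_{k}}\in B_{1}\setminus\Omega$. The hypothesis $\{|\nabla u|\ne0\}\subset\Omega$ forces $\nabla u(x_{0}+r_{j_{k}}y_{j_{k}})=0$, hence $\nabla u_{r_{j_{k}}}(y_{j_{k}})=0$, and $C^{1,\alpha}_{\text{loc}}$ convergence yields $\nabla u_{0}(y)=0$. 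The delicate step is the PDE stability: $C^{1,\alpha}_{\text{loc}}$ convergence does not control $D^{2}u_{r_{j}}$ pointwise, so the passage to the limit must route either through viscosity stability (checking that $W^{2,n}$ solutions qualify in the mixed setting of \eqref{eq:main} and that the perturbed operators $F(\cdot,x_{0}+r_{j}\cdot)$ converge in the sense required) or through a $W^{2,p}$ compactness argument based on the interior estimates underlying Theorem \ref{c11}; the analogous preservation of the constraint $|D^{2}u_{0}|\le K$ across the varying coincidence sets requires parallel care.
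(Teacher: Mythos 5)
Your plan follows the same overall arc as the paper's proof — uniform $C^{1,1}$ bounds from Theorem \ref{c11}, subsequential convergence in $C^{1,\alpha}_{\text{loc}}$, passage of the equation to the limit on $\Omega(u_0)$, and the gradient inclusion via $\{|\nabla u|\ne 0\}\subset\Omega$. Two of your steps, however, are either shakier than you acknowledge or can be closed more cleanly than you suggest.

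First, for the interior passage of the PDE the paper avoids viscosity stability entirely: once $B_\delta(y_0)\subset\Omega(u_{r_j})$ for large $j$, the equation $F(D^2 u_{r_j}(z),x_0+r_j z)=f(x_0+r_j z)$ holds with no free boundary on $B_\delta(y_0)$, so \ref{hyp:concavity} together with the interior $C^{2,\alpha}$ estimates of \cite[Theorem~8.1]{MR1351007} gives uniform $C^{2,\alpha}(B_\delta(y_0))$ bounds on $u_{r_j}$; a further subsequence then converges in $C^{2}(B_\delta(y_0))$ and the equation passes to the limit classically, using \ref{eq:H4} and the continuity of $f$ as you note. Your route through $L^n$-viscosity stability followed by Evans--Krylov is viable but longer, and it carries exactly the verification burdens you flag in your closing paragraph; the paper's direct $C^{2}$-compactness route sidesteps them.

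Second, the weak-$\ast$ argument for the constraint $|D^2 u_0|\le K$ on $\mathbb{R}^n\setminus\Omega(u_0)$ does not go through as stated: weak-$\ast$ convergence of $D^2 u_{r_j}$ preserves pointwise bounds on a \emph{fixed} measurable set, but here the coincidence sets $B_{1/r_j}((-x_0)/r_j)\setminus\Omega(u_{r_j})$ vary with $j$, and a point $y$ in the Kuratowski limsup need not belong to any of them. The paper bypasses this using the very gradient argument you already carry out: since $\{|\nabla u|\ne0\}\subset\Omega$ forces $\nabla u_{r_j}\equiv 0$ on the $j$-th coincidence set, your contradiction argument shows $\nabla u_0\equiv 0$ on $\mathbb{R}^n\setminus\Omega(u_0)$; as $\nabla u_0$ is Lipschitz and vanishes identically on that set, $D^2 u_0=0$ a.e.\ there, which is stronger than the bound by $K$. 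Running the gradient inclusion step first makes the constraint come for free and removes the need for the weak-$\ast$ reasoning.
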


\begin{proof}
Theorem \ref{c11} implies $u\in C^{1,1}(B_{1/2})$; since $x_0 \in B_{1/4}$, if $r>0$ it follows that $u_{r} \in C^{1,1}(B_{1/4r})$. Let $E \Subset \mathbb{R}^n$ and note that since $C^{1,1}(E) \hookrightarrow C^{1,\alpha}(E)$ compactly for all $\alpha \in [0,1)$, there is a subsequence $\{u_{r_{j}}\}$ converging in $C_{\text{loc}}^{1,\alpha}(\mathbb{R}^{n})$ to a function $u_0 \in C^{1,1}(\mathbb{R}^{n})$ which is not identically zero by Lemma \ref{nondeg}. Thus, $|D^{2}u_{0}|$ is bounded a.e.
in $\mathbb{R}^{n}\backslash\Omega(u_{0})$ (in fact, $|D^{2}u_{0}|=0$
a.e. there since $|D^{2}u_{r_{j}}(y)|=0$ a.e. on $\{|\nabla u_{r_{j}}|=0\}$). Next, let  $y \in\Omega(u_{0})$
and select $\delta>0$ such that $B_{\delta}(y_{0})\subset\Omega(u_{r_{j}})$ for $j$ large enough (by taking a further subsequence, if necessary); note that $u_{r_{j}}$ is $C^{2,\alpha}(B_\delta (y_0))$ in this set (by \cite[Theorem 8.1]{MR1351007}). We can therefore, without loss of generality, assume strong convergence of $u_{r_{j}}$
to $u_{0}$ in $C^{2}(B_{\delta}(y_{0}))$. In particular,
\begin{align*}
F(D^{2}u_{0}(y),x_{0}) & =\lim_{j\to\infty}F(D^{2}u_{r_{j}}(y),x_{0}+r_j y)\\
 & =\lim_{j\to\infty}f(x_{0}+r_j y)=f(x_{0}),\qquad y\in B_{\delta}(y_{0}).
\end{align*}
To conclude the proof, note that for $j$ large enough, $|\nabla u_{r_{j}}|\ne0$ in a neighborhood of a point $x$ where $|\nabla u_{0}(x)|\ne0$, and so it follows that $\{|\nabla u_{0}|\ne0\}\subset\Omega(u_{0})$.
\end{proof}

Since blow-up solutions are solutions to a free boundary problem on $\mathbb{R}^n$, one may consider the classification of these global solutions. To this aim, one introduces $$\delta_r(u,x):= \frac{\MD(\lambda \cap B_r(x))}{r},$$ where $\lambda:= B_1 \setminus \Omega$ (recall that $\MD(E)$ is the smallest possible distance between two hyperplanes containing $E$). Note that $\delta$ is well-behaved under scaling and thus with respect to the blow-up procedure: $\delta_1(u_r,0)=\delta_r(u,x)$, where $u_r(y)=(u(x+ry)-u(x))/r^2$. Now after blow-up,  even for general operators, the operator will solely be a function of the matrix variable and if $f$ is a positive function bounded away from zero, by letting $G(M):=F(M,x_{0})/f(x_{0})$, the problem of classifying global solutions reduces to the content of \cite[Proposition 3.2]{2012arXiv1212.5809F}. 

\begin{prop} \label{hs}
Suppose $0<c\le \inf_{x \in B_1}f(x)$, fix $x_0 \in B_1$, and let $u_{0}$ be a $W^{2,n}(\mathbb{R}^{n})$ solution to 
\[
\begin{cases}
F(D^{2}u(y),x_{0})=f(x_{0}) & \text{a.e. in }\Omega(u_{0}),\\
|D^{2}u|\le K & \text{a.e. in }\mathbb{R}^{n}\backslash\Omega(u_{0}),
\end{cases}
\]
with $\{|\nabla u_{0}|\ne0\}\subset\Omega(u_{0})$. If $F$ is convex and there exists $\epsilon_{0}>0$ such that
\[
\delta_{r}(u,x)\ge\epsilon_{0},\qquad\forall r>0,\,\forall x\in\partial\Omega(u_0),
\]
then $u_{0}$ is a half-space solution, $u_{0}(x)=\gamma_{x_{0}}[(x\cdot e_{x_{0}})^{+}]^{2}/2+c$,
where $e_{x_{0}}\in\mathbb{S}^n$ and $\gamma_{x_{0}}\in(1/\lambda_{1},1/\lambda_{0})$
are such that $F(\gamma_{x_{0}}e_{x_{0}}\otimes e_{x_{0}},x_{0})=f(x_{0})$.
\end{prop}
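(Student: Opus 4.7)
The plan is to reduce the classification to the spatially independent case treated in \cite[Proposition 3.2]{2012arXiv1212.5809F}, as indicated by the paragraph preceding the statement. Since $x_0$ is frozen in the blow-up equation, the operator $M \mapsto F(M,x_0)$ carries no explicit spatial dependence, so that Figalli--Shahgholian classification applies essentially verbatim once a scalar normalization is in place.

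First I would define
\[
G(M) := \frac{F(M,x_0)}{f(x_0)},
\]
which is legitimate because $f(x_0)\ge c>0$. Then $G$ inherits from $F(\cdot,x_0)$ the three structural features needed: $G(0)=0$, convexity in $M$, and uniform ellipticity (with constants $\lambda_0/f(x_0)$ and $\lambda_1/f(x_0)$). In these variables $u_0$ solves
\[
\begin{cases}
G(D^2 u_0)=1 & \text{a.e. in } \Omega(u_0),\\
|D^2 u_0|\le K & \text{a.e. in } \mathbb{R}^n\setminus\Omega(u_0),
\end{cases}
\]
with $\{|\nabla u_0|\ne 0\}\subset\Omega(u_0)$, and the uniform thickness bound $\delta_r(u_0,x)\ge\epsilon_0$ transfers unchanged since it is purely a geometric condition on $\partial\Omega(u_0)$ and does not involve the operator.

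Next I would invoke \cite[Proposition 3.2]{2012arXiv1212.5809F} directly on $G$; its conclusion is that the only global solutions satisfying these hypotheses are half-space solutions of the form $u_0(x)=\gamma[(x\cdot e)^+]^2/2+c$, where $e\in\mathbb{S}^n$ and $\gamma$ is the unique scalar satisfying $G(\gamma e\otimes e)=1$, equivalently $F(\gamma_{x_0} e_{x_0}\otimes e_{x_0},x_0)=f(x_0)$. The two-sided bound on $\gamma_{x_0}$ then follows by substituting the rank-one matrix $\gamma_{x_0} e_{x_0}\otimes e_{x_0}$ into the Pucci inequalities from \ref{hyp:uniformellipticity} with $N=0$ and solving for $\gamma_{x_0}$; this uses \ref{hyp:0@0} crucially.

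I do not expect the reduction itself to be the obstacle -- it is essentially a verification of hypotheses, and convexity/ellipticity/normalization all pass cleanly to $G$. The genuine difficulty, which we are importing as a black box from \cite{2012arXiv1212.5809F}, is the exclusion of non-half-space global solutions. That step relies essentially on both convexity of $F$ and the thickness bound $\epsilon_0>0$: convexity provides a Bernstein-type rigidity for second derivatives of $u_0$, while the thickness hypothesis prevents the coincidence set from degenerating into a lower-dimensional object on which the directional monotonicity argument that forces the one-dimensional profile would fail.
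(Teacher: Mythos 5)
Your reduction to \cite[Proposition 3.2]{2012arXiv1212.5809F} via the normalized operator $G(M):=F(M,x_0)/f(x_0)$ is precisely the argument the paper gives, stated in the paragraph preceding the proposition (no separate proof block is supplied). Your added verification that $G$ inherits \ref{hyp:0@0}, convexity, and uniform ellipticity, that the thickness condition $\delta_r\ge\epsilon_0$ is purely geometric and hence unaffected by the normalization, and that the two-sided bound on $\gamma_{x_0}$ comes from inserting $\gamma_{x_0}e_{x_0}\otimes e_{x_0}$ into the Pucci inequalities together with \ref{hyp:0@0}, simply spells out what the paper leaves implicit.
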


\subsection{Directional monotonicity and \texorpdfstring{$C^1$}{C1,1} regularity of the free boundary}

In what follows, two technical monotonicity lemmas will be established and utilized in proving that the free boundary is $C^1$. 

\begin{lem} \label{gain}
Let $u$ be a $W^{2,n}(B_{1})$ solution of 
\begin{equation}
\begin{cases}
F(D^{2}u(x),rx)=f(rx) & \text{a.e. in }B_{1}\cap\Omega,\\
|D^{2}u|\le K & \text{a.e. in }B_{1}\backslash\Omega,
\end{cases}\label{eq:scaled}
\end{equation}
and assume $f$ is $C^{0,1}$, $\inf_{B_1} f>0$, and $F$ is convex in the matrix variable and satisfies \ref{hyp:0@0}, \ref{hyp:uniformellipticity}, and \ref{eq:H4} with $\alpha = 1$. If $\{u\ne0\} \subset \Omega$ and $C_{0}\partial_{e}u-u\ge-\epsilon_{0}$ in $B_1$,
then 
\[
C_{0}\partial_{e}u-u\ge0
\]
 in $B_{1/2}$ provided $$\epsilon_{0}\le (\inf_{B_1} f)/(64n\lambda_{1}),$$ and $$0<r \le\min\{\|f\|_{L^{\infty}(B_{1})}/(2C_{0}\|\nabla f\|_{L^{\infty}(B_{1})}+2C_{0}\overline{C}),1\}.$$
\end{lem}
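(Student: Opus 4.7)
Let $w := C_{0}\partial_{e}u-u$. By Theorem \ref{c11}, $u\in C^{1,1}(B_{1/2})$, and the hypothesis $\{u\ne 0\}\subset\Omega$ forces $u\equiv 0$ on $\overline{B_{1}\setminus\Omega}$, so $\nabla u\equiv 0$ there as well (on interior points this is classical; on $\partial\Omega$ one invokes continuity of $\nabla u$ together with the porosity of $\partial\Omega$ established in \S\ref{frebd}). Hence $w\equiv 0$ on $\overline{B_{1}\setminus\Omega}$, and the conclusion reduces to showing $w\ge 0$ throughout $B_{1/2}$. The plan is to derive a strict uniformly elliptic differential inequality $Lw\le -c_{0}<0$ in $\Omega\cap B_{1}$ and then compare $w$ with an explicit Pucci barrier.

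For the inequality, I would formally differentiate $F(D^{2}u(x),rx)=f(rx)$ in direction $e$; since $F$ is only convex in $M$ and $u\in W^{2,n}$, the computation is made rigorous via a difference quotient plus the subgradient inequality for convex maps. Setting $a_{ij}:=F_{M_{ij}}(D^{2}u(x),rx)$, convexity gives
\begin{equation*}
a_{ij}(M-D^{2}u(x))_{ij}\le F(M,rx)-f(rx)\qquad\forall\, M.
\end{equation*}
Applying this with $M=D^{2}u(x+he)$, replacing $F(D^{2}u(x+he),rx)$ by $f(r(x+he))+O(rh|D^{2}u|)$ via \ref{eq:H4} with $\alpha=1$ and using $|D^{2}u|\le\overline{C}$ in $B_{1/2}$ from Theorem \ref{c11}, then dividing by $h$ and sending $h\to 0^{+}$, yields $a_{ij}\partial_{ij}(\partial_{e}u)\le r\|\nabla f\|_{L^{\infty}}+r\overline{C}$ a.e. in $\Omega\cap B_{1/2}$. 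Choosing instead $M=0$ and invoking \ref{hyp:0@0} gives $a_{ij}\partial_{ij}u\ge f(rx)$. Multiplying the first bound by $C_{0}$ and subtracting produces
\begin{equation*}
Lw:=a_{ij}\partial_{ij}w\le C_{0}r\|\nabla f\|_{L^{\infty}}+C_{0}r\overline{C}-f(rx)\le -c_{0}\quad\text{a.e. in }\Omega\cap B_{1},
\end{equation*}
where the restriction imposed on $r$ is precisely what forces the final inequality with $c_{0}$ a definite positive multiple of $\inf_{B_{1}}f$; in particular $\mathcal{P}^{-}(D^{2}w)\le -c_{0}$ in $\Omega\cap B_{1}$.

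To conclude, suppose for contradiction that $w(x^{*})<0$ at some $x^{*}\in B_{1/2}$, and let $V$ be the connected component of $\{w<0\}\cap B_{1}$ containing $x^{*}$; then $V\subset\Omega$ by Step 1. Compare $w$ on $V$ with the explicit Pucci solution
\begin{equation*}
\phi(x):=-\epsilon_{0}+\frac{c_{0}}{2n\lambda_{1}}(1-|x|^{2}),
\end{equation*}
which satisfies $\mathcal{P}^{-}(D^{2}\phi)=-c_{0}$ in $B_{1}$ with $\phi|_{\partial B_{1}}=-\epsilon_{0}$. The boundary inequalities $w\ge-\epsilon_{0}=\phi$ on $\partial V\cap\partial B_{1}$ and $w=0$ on $\partial V\cap B_{1}$, together with the Pucci comparison principle, yield $w\ge\phi$ in $V$. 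Finally, the smallness $\epsilon_{0}\le(\inf_{B_{1}}f)/(64n\lambda_{1})$ forces $\phi>0$ throughout $\overline{B_{1/2}}$, whence $w(x^{*})\ge\phi(x^{*})>0$, a contradiction.

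The technical heart of the argument is the barrier step: the radial Pucci solution $\phi$ is strictly positive on the bulk of $B_{1}$, so the pointwise inequality $w\ge\phi$ at the interior portion of $\partial V$ (where only $w=0$ is known) is not automatic and must be handled by carefully tracking the quantitative strength of $Lw\le -c_{0}$ against the smallness of $\epsilon_{0}/c_{0}$; equivalently, one must preclude components of $\{w<0\}$ that reach deeply into $B_{1/2}$. This is exactly the role of the precise numerical thresholds on $\epsilon_{0}$ and $r$ in the statement.
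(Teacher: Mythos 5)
Your Step 1 (reducing to $\Omega\cap B_{1}$) and Step 2 (the subdifferential inequalities producing $Lw\le -c_{0}$ in $\Omega\cap B_{1}$, with the required $L^{\infty}$ bound on $D^{2}u$ from Theorem~\ref{c11}) are both sound and track the paper closely. The problem is Step 3, the barrier comparison, and it is a genuine gap rather than a technicality to be deferred: the Pucci comparison principle to conclude $w\ge\phi$ in $V$ requires $w\ge\phi$ on \emph{all} of $\partial V$, but on the interior portion $\partial V\cap B_{1}$ you only have $w=0$, while your $\phi(x)=-\epsilon_{0}+\frac{c_{0}}{2n\lambda_{1}}(1-|x|^{2})$ is strictly positive there whenever $|x|^{2}<1-\frac{2n\lambda_{1}\epsilon_{0}}{c_{0}}$, i.e.\ on essentially all of $B_{1}$ once $\epsilon_{0}$ is small. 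So $w-\phi<0$ on the very boundary piece you need to control, and the comparison never gets off the ground. Your closing paragraph flags this, but flagging the gap is not the same as closing it; the "quantitative tracking" you allude to is exactly the missing content.

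The paper sidesteps this entirely by \emph{localizing the barrier at the putative negative point} instead of at the origin. Suppose $y_{0}\in B_{1/2}\cap\Omega$ with $C_{0}\partial_{e}u(y_{0})-u(y_{0})<0$, and set
\[
\widetilde w(x):=C_{0}\partial_{e}u(x)-u(x)+\frac{c}{2n\lambda_{1}}|x-y_{0}|^{2},\qquad c:=\tfrac12\inf_{B_{1}}f,
\]
for which $a_{ij}\partial_{ij}\widetilde w\le 0$ (your Step~2 plus $\operatorname{tr} a\le n\lambda_{1}$). Apply the minimum principle on $B_{1/4}(y_{0})\cap\Omega$, which sits compactly in $B_{1}$ because $y_{0}\in B_{1/2}$. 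On $\partial\Omega$ the first two terms vanish and the quadratic is nonnegative, so $\widetilde w\ge 0$ there; on $\partial B_{1/4}(y_{0})$ the hypothesis $C_{0}\partial_{e}u-u\ge-\epsilon_{0}$ combined with the quadratic $\ge\frac{c}{32n\lambda_{1}}$ gives $\widetilde w\ge-\epsilon_{0}+\frac{c}{32n\lambda_{1}}\ge 0$ under the stated bound on $\epsilon_{0}$. Yet $\widetilde w(y_{0})<0$, contradicting the minimum principle. The crucial structural feature, which your origin-centered $\phi$ lacks, is that the quadratic perturbation vanishes precisely at the point where the function is known to be negative and is uniformly bounded below away from that point; this makes the boundary inequality hold on both pieces of $\partial(B_{1/4}(y_{0})\cap\Omega)$ with no further case analysis. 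Replacing your global barrier by this local one, and dropping the unnecessary invocation of porosity in Step~1 (continuity of $u$ and $\nabla u$ from the $C^{1,1}$ estimate already gives $u=\partial_{e}u=0$ on $\partial\Omega$), would bring your argument in line with the paper's.
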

\begin{proof}
Let $x \in \Omega$ and $\partial F(M,x)$ denote the subdifferential of $F$ at the point $(M,x)$ and note that convexity implies $\partial F(M,x) \neq \emptyset$. Consider a measurable function $P^M$ mapping $(M,x)$ to $P^{M}(x)\in\partial F(M,x)$. Since $u\in C_{\text{loc}}^{2,\alpha}(\Omega)$ (see e.g. \cite[Theorem 8.1]{MR1351007}), we can define the measurable coefficients $a_{ij}(x):=(P^{D^{2}u(x)}(rx))_{ij}\in\partial F(D^{2}u(x),rx)$. By convexity of $F(\cdot,x)$ and the fact that $F(0,x)\equiv0$, we have
\begin{equation}
a_{ij}(x)\frac{\partial_{ij}u(x+he)-\partial_{ij}u(x)}{h}\le\frac{F(D^{2}u(x+he),rx)-F(D^{2}u(x),rx)}{h},\label{eq:convexityconsequence1}
\end{equation}
\[
a_{ij}(x)\partial_{ij}u(x)=F(0,rx)+a_{ij}(x)\partial_{ij}u\ge F(D^{2}u(x),rx)=f(rx),
\]
provided $x+h\in\Omega$. Note that by \eqref{eq:convexityconsequence1} and \cite[Theorem 3.8]{CCKS} (uniform limits of viscosity solutions are viscosity solutions),  we have 
\begin{align*}
 &a_{ij}(x)\partial_{ij}\partial_{e}u(x) \le\limsup_{h\to0}\frac{F(D^{2}u(x+he),rx)-F(D^{2}u(x),rx)}{h}\\
 & =\limsup_{h\to0}\frac{F(D^{2}u(x+he),rx)-f(rx)}{h}\\
 & =\limsup_{h\to0}\frac{F(D^{2}u(x+he),rx)-F(D^{2}u(x+he),rx+rhe)}{h}\\
 &\qquad\qquad\qquad +\frac{f(rx+rhe)-f(rx)}{h}\\
 & =r\limsup_{h\to0}\frac{F(D^{2}u(x+he),rx)-F(D^{2}u(x+he),rx+rhe)}{h}\\
 &\qquad\qquad\qquad +\frac{f(rx+rhe)-f(rx)}{rh}\\
 & =r(\partial_{e}f)(rx)-r(\partial_{x,e}F)(D^{2}u(x),rx) \qquad\text{a.e.},
\end{align*}
where $\partial_{x,e}$ denotes the spatial directional derivative in
the direction $e$. If there is $y_{0} \in B_{1/2} \cap \Omega$ such that
$C_{0}\partial_{e}u(y_{0})-u(y_{0})<0$, then consider the auxiliary
function
\[
w(x)=C_{0}\partial_{e}u(x)-u(x)+c\frac{|x-y_{0}|^{2}}{2n\lambda_{1}},
\]
where $c=\inf_{B_{1}}f/2$. Note that for $r\le\min\{c/(C_{0}\|\nabla f\|_{L^{\infty}(B_{1})}+C_{0}\overline{C}),1\}$,
\begin{align*}
a_{ij}(x)\partial_{ij}w(x) & \le rC_{0}(\partial_{e}f)(rx)-rC_{0}(\partial_{x,e}F)(D^{2}u(x),rx)-f(rx)+c\\
 & \le rC_{0}\|\nabla f\|_{L^{\infty}(B_{1})}+rC_{0}\overline{C}-f(rx)+c\le2c-f(rx)\le0.
\end{align*}
Hence $w$ is a supersolution and therefore attains its minimum on the
boundary of $B_{1/4}(y_0)\cap\Omega$. However on $\partial\Omega$,
$w$ is positive (since both $u$ and $\partial_{e}u$ are zero); thus, the minimum is attained on $\partial B_{1/4}(y_0)$, and this implies
\[
0>\min_{B_{1/4}(y_0)\cap\Omega}w\ge-\epsilon_{0}+\frac{c}{32n\lambda_{1}},
\]
a contradiction if $\epsilon_{0}\le c/(32n\lambda_{1})$. \end{proof}

\begin{lem} \label{gain2}
Let $u$ be a $W^{2,n}(B_{1})$ solution of \eqref{eq:scaled} where $F$ and $f$ are $C^{0,1}$ in the spatial variable, $F$ is $C^1$ in the matrix variable, and $F$ is convex and satisfies \ref{hyp:0@0} - \ref{hyp:uniformellipticity}. Assume further that $\{\nabla u\ne0\} \subset \Omega$ and $\inf_{B_1} f >0$. If $C_{0}\partial_{e}u-|\nabla u|^{2}\ge-\epsilon_{0}$ in $B_1$
for some $C_{0}, \epsilon_{0}>0$, then 
\[
C_{0}\partial_{e}u-|\nabla u|^{2}\ge0
\]
in $B_{1/2}$ provided that $\epsilon_{0}\le \mu_1$
and $0<r\le \mu_2$, where $\mu_1>0$ and $\mu_2>0$ are constants depending on given bounds.
\end{lem}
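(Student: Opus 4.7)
The plan is to mirror Lemma \ref{gain}, replacing the auxiliary function $C_0 \partial_e u - u$ by $C_0 \partial_e u - |\nabla u|^2$; this forces differentiating the equation one further time, which is why the stronger regularity on $F$ and $f$ is imposed. Suppose for contradiction that there exists $y_0 \in B_{1/2}$ with $C_0 \partial_e u(y_0) - |\nabla u(y_0)|^2 < 0$. Since $\{|\nabla u| \ne 0\} \subset \Omega$, outside $\Omega$ both $\partial_e u$ and $|\nabla u|^2$ vanish, so $y_0 \in \Omega$. I would then introduce the barrier
\[
w(x) := C_0\,\partial_e u(x) - |\nabla u(x)|^2 + c\,\frac{|x-y_0|^2}{2n\lambda_1},
\]
with a small constant $c>0$ to be chosen below.

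Thanks to convexity of $F$ in $M$, \ref{hyp:uniformellipticity}, $F\in C^1$ in the matrix variable, and the Lipschitz dependence in $x$ (of both $F$ and $f$), a standard Evans--Krylov plus Schauder bootstrap inside $\Omega$ lets me work with the linearized coefficients $a_{ij}(x):=\partial_{M_{ij}}F(D^2 u(x), rx)$ and differentiate the equation $F(D^2u(x), rx)=f(rx)$ in the direction $e_k$:
\[
a_{ij}(x)\,\partial_{ijk}u(x)=r(\partial_k f)(rx)-r(\partial_{x_k}F)(D^2 u(x), rx).
\]
Using the identity $\partial_{ij}|\nabla u|^2 = 2\,\partial_k u\,\partial_{ijk}u + 2\,\partial_{ik}u\,\partial_{jk}u$ together with the computation of $a_{ij}\partial_{ij}\partial_e u$ from Lemma \ref{gain} (which is $O(r)$ with constants depending on $\|\nabla f\|_{L^\infty}$ and the $C^{1,1}$ bound $\overline{C}$ of Theorem \ref{c11}), I arrive at
\[
a_{ij}\partial_{ij} w \;\le\; C\,r \;-\; 2\, a_{ij}\partial_{ik}u\,\partial_{jk}u \;+\; c,
\]
with $C$ depending only on the given data; the final $c$ comes from the bound $a_{ij}\partial_{ij}\bigl(c|x-y_0|^2/(2n\lambda_1)\bigr)\le c$.

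The key new ingredient, replacing the direct estimate $-a_{ij}\partial_{ij}u\le -f$ from Lemma \ref{gain}, is a pointwise \emph{lower} bound on $|D^2 u|$ inside $\Omega$. Convexity of $F(\cdot,rx)$ together with $F(0,rx)=0$ yields $a_{ij}\partial_{ij}u\ge f(rx)$, while upper ellipticity gives $a_{ij}\partial_{ij}u\le n\lambda_1|D^2 u|$, and hence
\[
|D^2u(x)|\;\ge\;\frac{\inf_{B_1}f}{n\lambda_1}\;>\;0 \qquad\text{on }\Omega.
\]
Lower ellipticity then produces $2\, a_{ij}\partial_{ik}u\,\partial_{jk}u\ge 2\lambda_0|D^2 u|^2\ge \kappa:=2\lambda_0(\inf f)^2/(n\lambda_1)^2>0$. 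Choosing $c:=\kappa/2$ and then $r$ small enough that $C\,r\le \kappa/4$ (this determines $\mu_2$) makes $a_{ij}\partial_{ij} w\le -\kappa/4 <0$, so $w$ is a strict supersolution of a uniformly elliptic equation with no zeroth-order term on $B_{1/4}(y_0)\cap\Omega$.

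On $\partial\Omega$ the hypothesis $\{|\nabla u|\ne 0\}\subset\Omega$ forces $\nabla u=0$, so $w\ge 0$ there; on $\partial B_{1/4}(y_0)$ the smallness $\epsilon_0\le \mu_1:=c/(32 n\lambda_1)$ yields $w\ge -\epsilon_0 + c/(32n\lambda_1)\ge 0$. The minimum principle then gives $w\ge 0$ throughout $B_{1/4}(y_0)\cap\Omega$, contradicting $w(y_0)<0$. The genuinely new obstacle compared with Lemma \ref{gain} is securing the strict pointwise lower bound $|D^2 u|\gtrsim \inf f$; once that is in hand through the convexity--ellipticity sandwich on $a_{ij}\partial_{ij}u$, the rest is the same barrier argument.
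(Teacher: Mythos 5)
Your proof is correct and follows essentially the same route as the paper: differentiate the rescaled equation, apply the linearized operator $a_{ij}\partial_{ij}$ to $|\nabla u|^2$, obtain a quantitative lower bound on $|D^2u|$ from $f\ge c>0$ so that the term $2a_{ij}\partial_{ik}u\,\partial_{jk}u\ge 2\lambda_0|D^2u|^2$ dominates the $O(r)$ errors, and close with the barrier/minimum-principle argument from Lemma~\ref{gain}. The only cosmetic difference is how you get $|D^2u|\gtrsim\inf f$ on $\Omega$: you use the convexity inequality $a_{ij}\partial_{ij}u\ge F(D^2u,rx)=f(rx)$ together with $a_{ij}\partial_{ij}u\le n\lambda_1|D^2u|$, whereas the paper invokes the Lipschitz bound $f=F(D^2u,\cdot)-F(0,\cdot)\le 2n\lambda_1|D^2u|$ coming from ellipticity; both are one-line consequences of \ref{hyp:0@0}--\ref{hyp:uniformellipticity} and give the same conclusion up to a constant.
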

\begin{proof}
By differentiating \eqref{eq:scaled}, it follows that
\[
F_{ij}(D^{2}u(y),ry)\partial_{ij}\nabla u=r\nabla f(ry)-r\nabla_{x}F(D^{2}u(y),ry),\qquad\text{a.e in }\Omega.
\]
Since $u\in C_{\text{loc}}^{2,\alpha}(\Omega)$ (by \cite[Theorem 8.1]{MR1351007}) and the right hand side of the equation above is in $L^\infty(\Omega)$ (hence, $L^{p}(\Omega)$ for any $p>0$), it follows by elliptic regularity theory that $\nabla u \in W_{\text{loc}}^{2,p}(\Omega)$ for any $p<\infty$ (see e.g. \cite[Corollary 9.18]{gilbarg:01}). By applying the operator $F_{ij}(D^{2}u(y),ry)\partial_{ij}$
to $|\nabla u|^{2}$, we obtain 
\begin{align}
&F_{ij}(D^{2}u(y),ry)\partial_{ij}|\nabla u(y)|^{2} \nonumber \\
&=2F_{ij}(D^{2}u(y),ry)\partial_{ijk}u(y)\partial_{k}u(y) +2F_{ij}(D^{2}u(y),ry)\partial_{ik}u(y)\partial_{jk}u(y)\label{eq:diffnablasquared} \\
 & =2r(\nabla f(ry)-\nabla_{x}F(D^{2}u(y),ry))\cdot\nabla u(y)+2F_{ij}(D^{2}u(y),ry)\partial_{ik}u(y)\partial_{jk}u(y)\nonumber 
\end{align}
For differentiable operators, the ellipticity condition can be written as
$$F_{ij}(D^{2}u(y),ry)\xi_{i}\xi_{j}\ge\lambda_{0}|\xi|^{2};$$ thus, \eqref{eq:diffnablasquared} yields
\begin{align}\label{eq:nablasquareest}
F_{ij}(D^{2}u(y),ry)\partial_{ij} & |\nabla u(y)|^{2} \\ 
&\ge 2r(\nabla f(ry)-\nabla_{x}F(D^{2}u(y),ry))\cdot\nabla u(y)+2\lambda_{0}|D^{2}u(y)|^{2}.\nonumber
\end{align}
Now \ref{hyp:0@0}-\ref{hyp:uniformellipticity} and the positivity of $f$ imply
\begin{equation}
0<c\le f(ry)=|F(D^{2}u(y),ry)-F(0,ry)|\le2n\lambda_{1}|D^{2}u|, \label{eq:D2uboundfrombelow}
\end{equation}
where $c:= \inf_{B_1} f$. By combining \eqref{eq:nablasquareest}
and \eqref{eq:D2uboundfrombelow}, it follows that
\[
F_{ij}(D^{2}u(y),ry)\partial_{ij}|\nabla u(y)|^{2}\ge2r(\nabla f(ry)-\nabla_{x}F(D^{2}u(y),ry))\cdot\nabla u(y)+\frac{c^{2}\lambda_{0}}{2n^{2}\lambda_{1}^{2}}.
\]
The proof now follows as in Lemma \ref{gain}: assume by contradiction that there is a point $y_{0} \in B_{1/2} \cap \Omega$ such that $C_{0}\partial_{e}u(y_{0})-|\nabla u(y_{0})|^{2}<0$ (outside $\Omega$ we have $|\nabla u|=0$). Let $d=\frac{c^{2}\lambda_{0}}{4n^{2}\lambda_{1}^{2}}$ and $$w(y)=C_{0}\partial_{e}u(y)-|\nabla u(y)|^{2}+d\frac{|y-y_{0}|^{2}}{2n\lambda_{1}}.$$ 
Next note that for $r$ sufficiently small, $w$ is a supersolution of $F_{ij}(D^{2}u(y),ry)\partial_{ij}$. Indeed, 
\begin{align*}
&F_{ij}(D^{2}u(y),ry)\partial_{ij}w  \\
&\le rC_{0}\|\nabla f\|_{L^{\infty}(B_{1})}+rC_{0}\overline{C}\\
&\qquad -2r(\nabla f(ry)-\nabla_{x}F(D^{2}u(y),ry))\cdot\nabla u(y)-\frac{c^{2}\lambda_{0}}{2n^{2}\lambda_{1}^{2}}+d \\
&\le r\overline{C}_{1}(u)-\frac{c^{2}\lambda_{0}}{2n^{2}\lambda_{1}^{2}}+d\le0,
\end{align*}
where for the last inequality we require $r\le\frac{c^{2}\lambda_{0}}{4n^{2}\lambda_{1}^{2}\overline{C}_{1}}$. Therefore $w$ attains a minimum on the boundary of $B_{1/4}(y_0)\cap\Omega$.
However, on $\partial\Omega$, $w$ is non-negative since both $u$
and $\partial_{e}u$ are zero, so the minimum has to be attained on $\partial B_{1/4}(y_0)$,
and this implies 
\[
0>\min_{B_{1/4}(y_0)\cap\Omega}w\ge-\epsilon_{0}+\frac{d}{32n\lambda_{1}}
\]
which is a contradiction if $\epsilon_{0}\le d/(32n\lambda_{1})$.
\end{proof}

\noindent We are now in a position to prove that under a suitable thickness assumption, the free boundary is $C^1$.
\begin{thm} \label{fbreg}
Let $u:B_{1}\to\mathbb{R}$ be a $W^{2,n}(B_{1})$ solution of \eqref{eq:main}. Let $F$ be a convex operator satisfying \ref{hyp:0@0}, \ref{hyp:uniformellipticity}, and \ref{eq:H4} with $\alpha = 1$, and assume further that $f$ is $C^{0,1}$. If $\{u\ne0\}\subset\Omega$
and there exists $\epsilon>0$ such that
\[
\delta_{r}(u,x)>\epsilon,\qquad\forall r<1/4,\, x\in\partial\Omega\cap B_{r},
\]
then there exists $r_{0}>0$ depending only on $\epsilon$ and given bounds such that $\partial\Omega\cap B_{r_{0}}(x)$ is a $C^{1}$-graph.
\end{thm}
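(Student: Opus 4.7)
I would follow the standard blow-up and monotonicity scheme. Fix a free boundary point $x_0 \in \partial\Omega \cap B_{1/4}$; after translation and subtracting an affine function we may assume $x_0 = 0$ and $u(0) = |\nabla u(0)| = 0$. Consider the rescalings $u_r(y) := u(ry)/r^2$, which solve a rescaled free boundary problem of the form \eqref{eq:scaled}. Theorem \ref{c11} gives uniform $C^{1,1}_{\text{loc}}$ bounds on $\{u_r\}$, so along a subsequence $r_j \to 0$ we have $u_{r_j} \to u_0$ in $C^{1,\alpha}_{\text{loc}}(\mathbb{R}^n)$. The thickness condition is scale-invariant and passes to the limit, so Proposition \ref{hs} identifies $u_0$ as a half-space solution $u_0(y) = (\gamma_0/2)[(y\cdot e_0)^+]^2$ for some $e_0 \in \mathbb{S}^{n-1}$ and $\gamma_0 \in (1/\lambda_1, 1/\lambda_0)$.

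A direct computation on the half-space solution shows that whenever $e \cdot e_0 \geq 1/2$, one has $C_0 \partial_e u_0 - |\nabla u_0|^2 \geq 0$ on $B_1$ provided $C_0$ is chosen depending only on $\gamma_0$. Exploiting the uniform $C^1$ convergence of $u_{r_j}$ to $u_0$ on $\overline{B}_1$, for $j$ large and every $e$ in a slightly smaller cone $\mathcal{C}$ around $e_0$ we obtain
\begin{equation*}
C_0 \partial_e u_{r_j} - |\nabla u_{r_j}|^2 \geq -\epsilon_0 \qquad \text{in } B_1,
\end{equation*}
where $\epsilon_0$ is the threshold of Lemma \ref{gain2}. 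Since we may also demand $r_j \leq \mu_2$, Lemma \ref{gain2} upgrades this to $\partial_e u_{r_j} \geq 0$ on $B_{1/2}$ for every $e \in \mathcal{C}$. Rescaling back, $u$ is monotone in every direction of $\mathcal{C}$ on $B_{r_j/2}(x_0)$, which means $\partial\Omega \cap B_{r_j/2}(x_0)$ is the graph of a Lipschitz function with Lipschitz constant controlled by the aperture of $\mathcal{C}$.

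To upgrade Lipschitz to $C^1$, one needs continuity of the normal direction $x \mapsto e_x$. The plan is to iterate the above argument at smaller scales and at nearby free boundary points: given any $x_1 \in \partial\Omega \cap B_{r_j/4}(x_0)$, the monotonicity established on $B_{r_j/2}(x_0)$ forces the blow-up direction $e_{x_1}$ obtained by the same blow-up procedure centered at $x_1$ to lie inside $\mathcal{C}$. Repeating at scale $r_j/4$ around $x_1$ produces a strictly narrower cone, and a standard iteration shows that the cone aperture can be made arbitrarily small uniformly in $x_1$ as one descends to smaller scales, yielding $e_{x_1} \to e_{x_0}$ as $x_1 \to x_0$. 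The main technical obstacle is that this iteration requires the thresholds $\epsilon_0, \mu_2$ of Lemma \ref{gain2} to be uniform over all free boundary points in a neighborhood of $x_0$ and over all small scales; this uniformity follows from the $C^{1,1}$ bound of Theorem \ref{c11}, the Lipschitz bound on $f$, and the fact that the rescaled equations share common ellipticity constants and share common \ref{eq:H4}-modulus at scale $r$ when $\alpha = 1$. Once uniformity is in hand, a covering argument produces the desired $r_0 > 0$ such that $\partial\Omega \cap B_{r_0}(x)$ is a $C^1$ graph.
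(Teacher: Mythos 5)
Your argument follows the right blueprint (blow-up, half-space classification via Proposition \ref{hs}, directional monotonicity, Lipschitz graph, then $C^1$), but you invoke Lemma \ref{gain2} (the $C_0\partial_e u - |\nabla u|^2$ version) where the theorem's hypotheses only support Lemma \ref{gain} (the $C_0\partial_e u - u$ version). Lemma \ref{gain2} requires $F$ to be $C^1$ in the matrix variable so that the equation can be differentiated to produce $F_{ij}(D^2u,\cdot)\partial_{ij}\nabla u$, whereas Theorem \ref{fbreg} assumes only convexity; it also requires $\{\nabla u\neq 0\}\subset\Omega$ rather than the stated $\{u\neq 0\}\subset\Omega$. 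The paper instead compares $\partial_e u/s$ to $u$ and applies Lemma \ref{gain}, whose hypotheses match the theorem exactly; the positivity of $u_{r_j}$ on $B_{1/2}$, needed to pass from $\partial_e u_{r_j}/s - u_{r_j}\geq 0$ to $\partial_e u_{r_j}\geq 0$, is obtained by integrating $\partial_e[\exp(-e\cdot y)\,u_{r_j}(y)]\geq 0$ in the case $s=1$, $e=e_x$. The remark immediately after Theorem \ref{fbreg} explicitly records that your route goes through only under the extra hypotheses $F\in C^1$ in $M$ and $\{\nabla u\neq 0\}\subset\Omega$; so as written your proposal proves that variant, not the stated theorem.

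Your passage from Lipschitz to $C^1$ is also more elaborate than what is needed. The paper observes that the monotonicity argument gives, for \emph{every} $s\in(0,1)$, $\partial_e u\geq 0$ on $B_{\eta(s)}(x)$ for all $e$ with $e\cdot e_x\geq s$, where $\eta(s)$ is made uniform over $x\in\partial\Omega\cap\overline{B}_{1/16}$ by a single covering/compactness step; hence $\partial\Omega\cap B_{\eta(s)}(x)$ is $s$-Lipschitz, and letting $s\to 0$ directly gives vanishing Lipschitz constant at small scales, i.e.\ $C^1$. Your iteration of narrowing cones aims at the same conclusion, but it is not clear from your sketch that the cone aperture actually shrinks from one scale to the next (only that $e_{x_1}$ remains inside the cone around $e_{x_0}$), and the uniformity you correctly flag as the main technical obstacle is precisely what the paper resolves by the covering step rather than by iteration. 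Replacing Lemma \ref{gain2} by Lemma \ref{gain} and replacing the iteration by the $s\to 0$/covering argument would bring your proposal in line with the theorem as stated.
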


\begin{proof}
Let $x\in\partial\Omega\cap B_{1/8}$ and consider the rescaling
$u_{r}(y):=\frac{u(ry+x)-u(x)}{r^{2}}$. By Theorem \ref{c11} we have
a uniform $C^{1,1}$-estimate with respect to $r$ and can therefore
find a subsequence $\{u_{r_{j}}\}$ converging in $C_{\text{loc}}^{1}(\mathbb{R}^n)$
to a global solution $u_{0}$, where $u_{0}(0)=0$. The thickness assumption implies $\delta_{r}(u,x)>\epsilon$ for all $r>0$, hence $u_{0}(y)=\gamma\frac{((y\cdot e_{x})_{+})^{2}}{2}$
according to Proposition \ref{hs}, where $\gamma\in[\lambda_{0},\lambda_{1}]$
and $e_{x}\in\partial B_{1}$. Now let $0<s\le1$. Then 
\[
\frac{\partial_{e}u_{0}}{s}-u_{0}\ge0
\]
in $B_{1}$ for any direction $e\in\partial B_{1}$ such that $e\cdot e_{x}\ge s$.
From the $C^{1}$-convergence of $\{u_{r_{j}}\}$ we have

\[
\frac{\partial_{e}u_{r_{j}}}{s}-u_{r_{j}}\ge-\epsilon_{0}
\]
in $B_{1}$ for $j\ge k(s,x)$ and $\epsilon_{0}$ as in Lemma \ref{gain}.
Therefore $u_{r_{j}}$ fulfills the assumptions of this lemma and
the above inequality can be improved to
\begin{equation}
\frac{\partial_{e}u_{r_{j}}(y)}{s}-u_{r_{j}}(y)\ge0,\qquad y\in B_{1/2}.\label{eq:wge0}
\end{equation}
For $s=1$, i.e. $e=e_{x}$, multiplying \eqref{eq:wge0} by $\exp(e\cdot y)$ implies
\[
\partial_{e}[\exp(-e\cdot y)u_{r_{j}}(y)]=\exp(-e\cdot y)(\partial_{e}u_{r_{j}}(y)-u_{r_{j}}(y))\ge0.
\]
Integrating this expression yields
\[
\exp(-e\cdot y)u_{r_{j}}(y)-\underbrace{u_{r_{j}}(0)}_{=0}=\int_{0}^{e\cdot y}\partial_{e}[\exp(-e\cdot z)u_{r_{j}}(z)]d(e\cdot z)\ge0,
\]
so $u_{r_{j}}(y)\ge0$ in $B_{1/2}$ and $\partial_{e}u_{r_{j}}(y)\ge 0$
follows from \eqref{eq:wge0}. In particular, we have shown that if $x\in\partial\Omega\cap B_{1/8}$ and $e\cdot e_{x}\ge s$, then $\partial_e u(z) \ge 0 $ for all $z \in B_{r_j/2}(x)$, where $r_j=r_j(s,x)$. Now $$\partial\Omega\cap \overline{B}_{1/16} \subset \bigcup_{x \in \partial\Omega\cap \overline{B}_{1/16}} B_{r_j/2}(x),$$ so by extracting a finite subcover and relabeling the radii, it follows that $$\partial\Omega\cap \overline{B}_{1/16} \subset \bigcup_{k=1}^N B_{\eta_k}(x_k),$$ where $\eta_k=\eta_k(x_k,s)$; set $\eta=\eta(s):=\displaystyle \min_k \eta_k$. Thus, for all $x \in \partial\Omega\cap \overline{B}_{1/16}$, we have $\partial_e u(z) \ge 0$ for all $z \in B_\eta(x)$, where $\eta$ only depends on $s$ and the given data (via the $C^1$ convergence of $u_{r_j}$). Therefore, if $s_0 \in (0,1)$, by letting $r_0:=\eta(s_0)$, it follows that the free boundary $\partial\Omega\cap B_{r_0}(x)$ is $s_0$-Lipschitz. Moreover, note that in a small neighborhood of the origin, by picking $s$ sufficiently small, the Lipschitz constant of the free boundary can be made arbitrarily small (the neighborhood only depends on $\eta(s)$). This shows that the free boundary is $C^1$ at the origin, and the same reasoning applies to any other point in $\partial\Omega\cap \overline{B}_{r_0}(x)$.
\end{proof}

\begin{rem}
In view of Lemma \ref{gain2}, we can replace the condition $\{u\ne0\}\subset\Omega$ by $\{\nabla u\ne0\}\subset\Omega$ in Theorem \ref{fbreg} whenever $F$ is $C^1$ in the matrix variable.  
\end{rem}

\begin{rem}
The free boundary analysis remains valid for more general operators, e.g. such as the ones appearing in Corollary \ref{genops}.
\end{rem}

\section{Parabolic case} \label{para}
In this section we generalize the former results regarding optimal regularity of the solution as well as $C^1$ regularity of the free boundary to the non-stationary setting. Since the parabolic case is very similar to the elliptic one, we mostly outline the proofs. The setup of the problem is as follows.  

\begin{itemize}
\item Let $Q_{r}(X):=B_{r}(x)\times(t-r^2,t)$, where $X=(x,t)$. For
convenience, $Q_{r}:=Q_{r}(0)$.
\item Instead of \eqref{eq:main} we consider the following problem,
\begin{equation}
\begin{cases}
\mathcal{H}(u(X),X)=f(X) & \text{a.e. in }Q_{1}\cap\Omega,\\
|D^{2}u|\le K & \text{a.e. in }Q_{1}\backslash\Omega,
\end{cases}\label{eq:paramain}
\end{equation}
where $\mathcal{H}(u(X),X):=F(D^{2}u(X),X)-\partial_{t}u(X)$, $\Omega\subset Q_{1}$
is some unknown set, and $K$ is a positive constant as before. We still assume $F$ to satisfy \ref{hyp:0@0}-\ref{hyp:concavity}
for all $X\in Q_{1}$ and 
\begin{align}\label{eq:paraH4}
	F(M,x,t)-F(M,y,s)\le C|M|(|x-y|^{\alpha_1}+|t-s|^{\alpha_2}).
\end{align}
with $\alpha_1,\alpha_2\in (0,1]$.
\item
We assume $f$ to be at least Hölder continuous in both the spatial and time coordinates.
\item Let $A_{r}(X^{0}):=\{(x,t)\in Q_{1}:(rx,r^{2}t)\in Q_{r}\backslash\Omega\}$.
\item Let $\tilde{D}^{2}u:=(D_{x}^{2}u,D_{t}u)$ denote the parabolic Hessian.
\item Let 
\[
\delta_{r}(u,X^{0}):=\inf_{t\in[t_{0}-r^{2},t_{0}+r^{2}]}\frac{\MD(\Proj_{x}(A\cap(B_{r}(x^{0})\times\{t\})))}{r},
\]
where $\MD(E)$ stands for the minimal diameter, i.e., the smallest
distance between two parallel hyperplanes that trap the set $E$,
and $\Proj_{x}$ is the projection on the spatial coordinates.
\end{itemize}
The main theorems corresponding to Theorem \ref{c11} and \ref{fbreg} are now stated
for the parabolic case; the first giving the optimal regularity of
solutions.
\begin{thm}[Interior $C_x^{1,1}\cap C_t^{0,1}$ regularity]
\label{thm:pararegularity}
Let $u:Q_{1}\to\mathbb{R}$ be a $W_{x}^{2,n}\cap W_{t}^{1,n}$ solution
of \eqref{eq:paramain}. Then there is a constant $C=C(n,\lambda_{0},\lambda_{1},\|u\|_{\infty},\|f\|_{\infty})>0$
such that
\[
|\tilde{D}^{2}u|\le C,\qquad\text{in }Q_{1/2}.
\]
\end{thm}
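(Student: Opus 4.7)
The plan is to mirror the elliptic proof of Theorem \ref{c11} step by step, replacing Euclidean balls by parabolic cylinders $Q_r(X_0)$, elliptic BMO and $W^{2,p}$ estimates by their parabolic counterparts, and Caffarelli's elliptic regularity theory by the parabolic version used in \cite{2013arXiv1309.0782F} (cf. \cite{MR1351007}). After the standing normalization $u(X_0) = |\nabla_x u(X_0)| = 0$ for $X_0 \in Q_{1/2}\cap\overline\Omega$, the four building blocks of the proof are: (i) a projection lemma producing, for each $r \in (0,1/4)$, a parabolic polynomial with Hessian $P_r(X_0)$ and time-slope $q_r(X_0)$ satisfying $F(P_r(X_0),X_0) - q_r(X_0) = f(X_0)$ and $\fint_{Q_r(X_0)} |\tilde D^2 u - (P_r, q_r)|^2 \le C$; (ii) a dyadic growth bound $|(P_{2r},q_{2r}) - (P_r,q_r)| \le C_0$; (iii) a quadratic/linear approximation estimate $\sup_{Q_r(X_0)}|u(X) - \tfrac12\langle P_r(X_0)(x-x_0),(x-x_0)\rangle - q_r(X_0)(t-t_0)| \le C_1 r^2$; and (iv) a fast-decay statement: if $|P_r(X_0)| > M$ then $|A_{r/2}(X_0)| \le 2^{-(n+2)}|A_r(X_0)|$.

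For (i), the constraint surface $\mathcal H(P,X_0) = f(X_0)$ is a $1$-parameter deformation (in $\xi I$) of the average parabolic Hessian, and existence of a projection with $L^\infty$ control is obtained from uniform ellipticity exactly as in Lemma \ref{lem:boundedBMOminimizer}, combined with the parabolic BMO estimate from \cite{2013arXiv1309.0782F}. For (iii), I rescale to $u_{r,X_0}(Y) := r^{-2}u(ry + X_0) - \tfrac12\langle P_r(X_0)y,y\rangle - q_r(X_0)s$ (with $Y = (y,s)$), write the equation for $u_{r,X_0}$ in the form $\tilde{\mathcal H}(\tilde D^2 u_{r,X_0},Y) = \phi(Y) + h(Y)$ where $\phi$ is $L^\infty$-bounded on $Q_1$ and $h$ absorbs the $F$-oscillation in $X$ controlled by \eqref{eq:paraH4}, and then apply parabolic ABP and the Krylov--Safonov/parabolic Evans--Krylov estimates together with a Poincar\'e-type argument and Lemma \ref{lem:boundedBMOminimizer}. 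For (iv), I solve the parabolic Dirichlet problem on $Q_1$ with boundary data $u_{r,X_0}$ and boundary/terminal condition matching the rescaled solution, giving $v_{r,X_0}$; the difference $w_{r,X_0}:= u_{r,X_0} - v_{r,X_0}$ satisfies a linear parabolic inequality with right-hand side supported in $A_r(X_0)$, so parabolic ABP gives $\|w_{r,X_0}\|_\infty \le C|A_r(X_0)|^{1/(n+2)}$, and parabolic $W^{2,p}$ estimates for $p = 2(n+2)$ combined with the pointwise identity $P_r(X_0) = D_x^2 u(rY+X_0) - D_x^2 v_{r,X_0} - D_x^2 w_{r,X_0}$ yield the decay.

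Given (i)--(iv), the conclusion follows by the dichotomy of Theorem \ref{c11}: at a Lebesgue point $X_0$ of $\tilde D^2 u$ in $\overline\Omega\cap Q_{1/2}$, either $\liminf_k |P_{2^{-k}}(X_0)| \le 3M$ and the quadratic approximation in (iii) directly yields $|\tilde D^2 u(X_0)| \le C$, or else there is a first level $k_0$ at which $|P_{2^{-k_0}}(X_0)| \le 2M + C_0$ and $|P_{2^{-k}}| > 2M$ for all $k \ge k_0$; after rescaling at level $2^{-k_0}$, iterating (iv) produces $|A_{2^{-k_0-j}}| \le 2^{-j(n+2)}|A_{2^{-k_0}}|$, which translates into the $L^{n+2}$-decay estimate $\fint_{Q_r}|\tilde f|^{n+2} \le C r^{n+2}$ on the rescaled right-hand side, while \eqref{eq:paraH4} bounds the parabolic oscillation of the rescaled operator by $C(|y|^{\alpha_1} + |s|^{\alpha_2/2})$. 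These hypotheses are precisely what the parabolic $C_x^{2,\alpha}\cap C_t^{1,\alpha}$ point-estimate of \cite{2013arXiv1309.0782F} (parabolic analog of Theorem 3 in \cite{MR1005611}) requires, yielding $|\tilde D^2 \overline{u}_0(0)| \le C$ and hence $|\tilde D^2 u(X_0)| \le C$.

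The main obstacle is bookkeeping in the parabolic scaling: the coincidence set must decay at rate $2^{-(n+2)}$ (not $2^{-n}$) for the $L^{n+2}$ bound on the rescaled right-hand side to close the iteration, and the mixed space/time H\"older exponents $(\alpha_1,\alpha_2)$ in \eqref{eq:paraH4} must be carried through consistently when verifying the oscillation condition for the rescaled operator $\tilde F$; a secondary technical point is ensuring that the solvability of the parabolic Dirichlet problem used to construct $v_{r,X_0}$ in (iv) is available under \ref{hyp:0@0}--\ref{hyp:concavity} combined with \eqref{eq:paraH4}, which is covered by the parabolic counterparts of \cite[Theorem 8.1]{MR1351007}.
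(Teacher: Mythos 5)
Your proposal mirrors the paper's own argument: the paper sketches precisely a parabolic analog of Lemma \ref{lemo} giving $\sup_{Q_r}|u-P_r|\le Cr^2$ for second-order parabolic polynomials solving $\mathcal{H}(P_r,\cdot)=f(\cdot)$ at the base point, a geometric decay $|A_{r/2}|\le |A_r|/2^{n+1}$ of the rescaled coincidence sets (the analog of Lemma \ref{lem:fastdecay}), and then closes by the dichotomy from the proof of Theorem \ref{c11} — which is exactly your steps (i)--(iv) and final dichotomy. One small exponent slip to correct: the parabolic Krylov--Tso ABP estimate controls $\|w_{r,X_0}\|_{L^\infty}$ by $\|\tilde\phi\chi_{A_r(X_0)}\|_{L^{n+1}(Q_1)}\le C|A_r(X_0)|^{1/(n+1)}$, not by $|A_r(X_0)|^{1/(n+2)}$; the exponent $n+1$ is the natural parabolic one for ABP and $W^{2,p}$ theory, which is why the paper records the decay as $2^{-(n+1)}$ rather than your $2^{-(n+2)}$. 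Because $|A_r(X_0)|\le|Q_1|$, the bound $|A_r|^{1/(n+2)}$ you invoke is still a (weaker) consequence, so your $W^{2,2(n+2)}$ chain and the decay iteration close anyway; but the attribution of the $1/(n+2)$ power to "parabolic ABP" should be amended.
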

The second theorem gives $C^{1}$ regularity of the free boundary if we add some additional assumptions on $\delta_{r}$, $f$ and $F$, as in the elliptic setting.
\begin{thm}[$C^{1}$ regularity of the free boundary]
\label{thm:paraboundaryregularity}
Let $u:Q_{1}\to\mathbb{R}$ be a $W_{x}^{2,n}\cap W_{t}^{1,n}$ solution of \eqref{eq:paramain}, and assume $\{u\ne 0\}\subset\Omega$. Suppose that $f$ is Lipschitz in $(x,t)$ and $f\ge c>0$. Let $F$ be convex in the matrix variable and suppose $F$ satisfies \ref{hyp:0@0}, \ref{hyp:uniformellipticity}, and \eqref{eq:paraH4} with $\alpha_1=\alpha_2 =1$. Then there exists an $\epsilon>0$ such that if
\[
	\delta_{r}(u,X^{0})>\epsilon
\]
uniformly in $r$ and $X^{0}\in\partial\Omega\cap Q_{r}$, then $\partial\Omega\cap Q_{r_{0}}$
is a $C^{1}$-graph in space-time, where $r_{0}$ depends only on $\epsilon$ and the data. 
\end{thm}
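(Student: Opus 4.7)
The plan is to mimic the elliptic argument given in Theorem \ref{fbreg}, replacing the elliptic blow-up procedure and the directional monotonicity lemma by their parabolic counterparts. The starting point is the optimal regularity assertion of Theorem \ref{thm:pararegularity}, which provides a uniform $C^{1,1}_x\cap C^{0,1}_t$ bound on $u$ and hence the compactness needed for blow-up limits.

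First I would establish parabolic non-degeneracy near a free boundary point $X^{0}=(x^{0},t^{0})\in\partial\Omega\cap Q_{1/2}$: using the positivity hypothesis $f\ge c>0$ together with $\{u\ne0\}\subset\Omega$, one constructs the auxiliary subsolution $v(X):=u(X)-c|x-x^{0}|^{2}/(2n\lambda_{1})$, which satisfies $\mathcal{H}(v,X)\ge 0$ in $\Omega\cap Q_{1}$ by \ref{hyp:uniformellipticity}; the parabolic maximum principle applied on $Q_{r}(X^{0})\cap\Omega$ then forces $\sup_{\partial_{p}Q_{r}(X^{0})}u\ge u(X^{0})+cr^{2}/(2n\lambda_{1})$, which in turn yields the linear growth of $\nabla u$ and porosity of $\partial\Omega$ in $Q_{1/4}$. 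Next I would perform the parabolic blow-up $u_{r}(y,s):=(u(x^{0}+ry,t^{0}+r^{2}s)-u(x^{0}))/r^{2}$; by Theorem \ref{thm:pararegularity} the family $\{u_{r}\}$ is precompact in $C^{1,\alpha}_{\text{loc}}$ in space and uniformly Lipschitz in time, so one extracts a subsequence $u_{r_{j}}\to u_{0}$ solving a global problem with the frozen operator $F(\cdot,X^{0})-\partial_{t}$ and right-hand side $f(X^{0})$ on $\mathbb{R}^{n+1}$, with $\{u_{0}\ne 0\}\subset\Omega(u_{0})$ by non-degeneracy.

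The classification step is the first genuine difficulty. Under the uniform thickness hypothesis $\delta_{r}(u,X^{0})>\epsilon$, the thickness passes to the limit $u_{0}$ for every time slice, and one has to argue that $u_{0}$ is in fact time-independent and equals a half-space solution $u_{0}(y,s)=\gamma_{X^{0}}((y\cdot e_{X^{0}})^{+})^{2}/2$ with $\gamma_{X^{0}}\in[1/\lambda_{1},1/\lambda_{0}]$ and $F(\gamma_{X^{0}}e_{X^{0}}\otimes e_{X^{0}},X^{0})=f(X^{0})$. The elliptic statement is Proposition \ref{hs}; to promote it to the parabolic setting one uses that $\partial_{t}u_{0}$ solves a uniformly parabolic equation with bounded coefficients in $\Omega(u_{0})$, combined with the linear growth of $u_{0}$ and the Caffarelli--Figalli--Shahgholian type monotonicity for $\partial_{t}u$, to conclude $\partial_{t}u_{0}\equiv 0$. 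Once $u_{0}$ is time-independent, Proposition \ref{hs} applied to the elliptic problem $F(D^{2}u_{0},X^{0})=f(X^{0})$ gives the half-space conclusion.

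Next I would prove the parabolic analog of Lemma \ref{gain}: if $u$ solves the rescaled parabolic equation in $Q_{1}$ and $C_{0}\partial_{e}u-u\ge-\epsilon_{0}$ in $Q_{1}$, then $C_{0}\partial_{e}u-u\ge 0$ in $Q_{1/2}$, provided $\epsilon_{0}$ and $r$ are sufficiently small in terms of $\inf f$, $\|\nabla_{x,t}f\|_{\infty}$, and the $C^{1,1}$ bound from Theorem \ref{thm:pararegularity}. The proof follows the same template: by convexity pick measurable $a_{ij}(X)\in\partial_{M}F(D^{2}u(X),rX)$, differentiate the equation in $e$, and form the auxiliary
\begin{equation*}
w(X):=C_{0}\partial_{e}u(X)-u(X)+\frac{c}{2n\lambda_{1}}\bigl(|x-y_{0}|^{2}+(t-s_{0})\bigr),
\end{equation*}
with $c:=\inf_{Q_{1}}f$. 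A direct computation using \ref{eq:H4} (with the parabolic version \eqref{eq:paraH4}) and the Lipschitz regularity of $f$ shows that $a_{ij}\partial_{ij}w-\partial_{t}w\le 0$ once $r$ is small; the parabolic maximum principle on $Q_{1/4}(X^{0})\cap\Omega$, combined with the fact that $w\ge 0$ on $\partial\Omega$ (since $u=|\nabla u|=0=\partial_{e}u$ there), delivers the desired contradiction with the initial lower bound $-\epsilon_{0}$.

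Finally I conclude exactly as in the proof of Theorem \ref{fbreg}. At any $X\in\partial\Omega\cap Q_{1/8}$, the $C^{1}_{\text{loc}}$ convergence $u_{r_{j}}\to u_{0}$ and the half-space form of $u_{0}$ give, for each $s\in(0,1]$ and every $e$ with $e\cdot e_{X}\ge s$, the bound $s^{-1}\partial_{e}u_{r_{j}}-u_{r_{j}}\ge -\epsilon_{0}$ in $Q_{1}$ for $j$ large. Applying the parabolic monotonicity lemma upgrades this to $s^{-1}\partial_{e}u_{r_{j}}-u_{r_{j}}\ge 0$ in $Q_{1/2}$, hence $\partial_{e}u\ge 0$ in a parabolic cylinder of radius $\eta(s,X)$ around $X$ (after integrating the inequality $\partial_{e}[\exp(-e\cdot y)u_{r_{j}}]\ge 0$ as in the elliptic case). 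A finite covering of $\partial\Omega\cap\overline{Q}_{1/16}$ yields a uniform radius $\eta(s)$, so the free boundary is $s$-Lipschitz in space (uniformly in time) on $Q_{\eta(s)}(X)$; letting $s\to 0$ improves Lipschitz to $C^{1}$. The main obstacle in the whole argument is the classification step, namely forcing the parabolic blow-up to be time-independent so that Proposition \ref{hs} applies; the remaining pieces are essentially parabolic adaptations of the elliptic proofs and proceed along the lines of \cite{2013arXiv1309.0782F}.
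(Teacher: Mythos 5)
Your overall strategy is the same as the paper's: parabolic non-degeneracy via the maximum principle, parabolic blow-up compactness from the optimal regularity estimate, classification of blow-ups as time-independent half-space solutions, a parabolic directional monotonicity lemma, and then a covering argument mirroring Theorem \ref{fbreg}. The paper treats the classification step by invoking a parabolic analogue (Proposition \ref{hspara}) of the elliptic classification result, plus a lemma stating that $\partial_t u$ vanishes at the free boundary, rather than independently arguing $\partial_t u_0\equiv 0$; but your sketch of this step is in the same spirit.

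There is, however, a concrete sign error in the one technical detail you write out explicitly, namely the parabolic auxiliary function. You propose
\[
w(X)=C_{0}\partial_{e}u(X)-u(X)+\frac{c}{2n\lambda_{1}}\bigl(|x-y_{0}|^{2}+(t-s_{0})\bigr),
\]
and claim $w\ge 0$ on $\partial\Omega$ because $u=\partial_e u=0$ there. But the parabolic cylinder $Q_{1/4}(y_0,s_0)$ in this paper is \emph{backward} in time, $B_{1/4}(y_0)\times(s_0-1/16,s_0)$, so for points $(x,t)\in\partial\Omega$ inside that cylinder with $t<s_0$ and $|x-y_0|$ small, your auxiliary term $|x-y_0|^2+(t-s_0)$ is \emph{negative}, and the claimed lower bound $w\ge 0$ on $\partial\Omega$ fails. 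The time-term must carry the opposite sign. Indeed the paper's auxiliary function is
\[
C\partial_e u(X)-u(X)+\tilde c\,\frac{|x-x_0|^2-(t-t_0)}{2n\lambda_1+1},\qquad \tilde c:=\tfrac12\inf_{Q_1}f,
\]
whose penalty term $|x-x_0|^2+(t_0-t)$ is nonnegative throughout the backward cylinder; the $+1$ in the denominator absorbs the extra $+\tilde c/(2n\lambda_1+1)$ coming from $-\partial_t$ of the time-term so that the supersolution inequality $a_{ij}\partial_{ij}w-\partial_t w\le 0$ still closes. With the corrected sign, your computation of the supersolution property and the subsequent contradiction go through as in Lemma \ref{gain}, and the remainder of your argument matches the paper's proof.
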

Theorem \ref{thm:pararegularity} follows from results corresponding to \cite[Lemma 2.1 and Proposition 2.2]{2013arXiv1309.0782F} which readily generalize to the parabolic setting thanks to our results in the ellipic case and \cite[Remark 6.3]{2013arXiv1309.0782F}. Indeed, we can show the inequality 
\[
\sup_{Q_{r}(0)}|u-P_{r}|\le Cr^{2},\qquad r\in(0,1)
\]
for some parabolic polynomials $P_{r}$ that solve the homogeneous equation \[\mathcal{H}(P_{r},0)=0,\] a result that is in the same vein as Lemma \ref{lemo}. Moreover, the above inequality together with an argument similar to the proof of Lemma \ref{lem:fastdecay} imply the geometric decay of the coincidence sets,
\[
|A_{r/2}|\le\frac{|A_{r}|}{2^{n+1}}.
\]
Theorem \ref{thm:pararegularity} is then proven in the same way as in the elliptic case.

Regarding the regularity of the free boundary, Lemma \ref{nondeg} is easily generalized
since the maximum principle holds in our case as well (see \cite[Corollary 3.20]{W1}), and the rest of the results are extended with the following parabolic blow-up lemma.

\begin{lem}\label{lem:blowuppara}
Let $u$ be a $W_x^{2,n}\cap W_t^{1,n}$ solution to \eqref{eq:paramain}. If $\{u\ne0\}\subset \Omega$, then for any $(x_{0},t_0) \in\partial\Omega(u) \cap Q_{1/4}$ there is a sequence $\{r_{j}\}$ such that $$u_{r_{j}}(y,t):=\frac{u(x_{0}+r_{j}y, t_0+r_j^2t)-u(x_{0},t_0)}{r_{j}^{2}}\to u_{0}(y,t)$$ locally uniformly as $r_{j}\to 0$, and $u_{0}$ solves 
\[
\begin{cases}
F(D^{2}u(y,t),x_{0},t_0)-\partial_t u(y,t)=f(x_{0}, t_0) & \text{a.e. in }\Omega(u_{0}),\\
|\tilde D^{2}u|\le K & \text{a.e. in }\mathbb{R}^{n}\backslash\Omega(u_{0}),
\end{cases}
\]
where $\{u_{0}\ne0\}\subset\Omega(u_{0})$.
\end{lem}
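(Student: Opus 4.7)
The plan is to mirror the elliptic blow-up lemma, replacing the ingredients with their parabolic analogues. By Theorem \ref{thm:pararegularity}, $u \in C^{1,1}_x \cap C^{0,1}_t(Q_{1/2})$; since $(x_0,t_0) \in Q_{1/4}$, the rescalings $u_{r_j}$ are defined on arbitrarily large parabolic cylinders for $r_j$ small and inherit a uniform $C^{1,1}_x \cap C^{0,1}_t$ bound on each compact set. Observe that the hypothesis $\{u\neq 0\}\subset \Omega$ forces $u\equiv 0$ on the (relatively open) complement of $\Omega$ where $|\tilde D^2 u|\le K$ is imposed; hence, since $u\in C^1_x$ and $(x_0,t_0)\in \partial\Omega$ is a limit of such points, $u(x_0,t_0)=0$ and $\nabla_x u(x_0,t_0)=0$, making the blow-up well-defined and uniformly bounded on compacta.

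First, by a parabolic Arzel\`a-Ascoli argument, extract a subsequence (still denoted $\{u_{r_j}\}$) converging locally uniformly to a function $u_0 \in C^{1,1}_x\cap C^{0,1}_t$ on $\mathbb{R}^n\times \mathbb{R}$. Define
$$\Omega(u_0) := (\mathbb{R}^n\times\mathbb{R}) \setminus \limsup_{j\to\infty}\bigl(Q_{1/r_j}(-x_0/r_j,-t_0/r_j^2)\setminus \Omega(u_{r_j})\bigr),$$
in analogy with the elliptic case, where $\Omega(u_{r_j})$ is the pullback of $\Omega$ under the parabolic rescaling. The bound $|\tilde D^2 u_{r_j}|\le K$ a.e.\ on the complement of $\Omega(u_{r_j})$ then passes to the limit to yield $|\tilde D^2 u_0|\le K$ a.e.\ on $(\mathbb{R}^n\times\mathbb{R})\setminus \Omega(u_0)$.

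To derive the equation on $\Omega(u_0)$, fix $(y_0,s_0)\in \Omega(u_0)$ and choose $\delta>0$ so that $Q_{\delta}(y_0,s_0)\subset \Omega(u_{r_j})$ for all $j$ sufficiently large. On each such cylinder, $u_{r_j}$ satisfies
$$F(D^2 u_{r_j}, x_0+r_j y, t_0+r_j^2 s)-\partial_s u_{r_j} = f(x_0+r_j y, t_0+r_j^2 s).$$
By interior parabolic $C^{2,\alpha}$ estimates for fully nonlinear concave/convex uniformly parabolic operators (the parabolic Evans-Krylov-Caffarelli theory, see \cite{W1}), $\{u_{r_j}\}$ is uniformly bounded in $C^{2,\alpha}(Q_{\delta/2}(y_0,s_0))$; passing to a further subsequence gives $C^2$-convergence there, and a diagonal argument over an exhaustion of $\Omega(u_0)$ by compact sets pins down a single subsequence. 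Letting $j\to\infty$ in the equation and invoking \eqref{eq:paraH4} together with the H\"older continuity of $f$ yields
$$F(D^2 u_0(y,s), x_0, t_0)-\partial_s u_0(y,s) = f(x_0,t_0).$$

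Finally, for the inclusion $\{u_0\neq 0\}\subset \Omega(u_0)$: if $u_0(y,s)\neq 0$, then by local uniform convergence, $u_{r_j}\neq 0$ in a neighborhood of $(y,s)$ for all $j$ large, so by hypothesis this neighborhood lies in $\Omega(u_{r_j})$, placing $(y,s)$ in $\Omega(u_0)$. The main obstacle is securing the uniform interior parabolic $C^{2,\alpha}$ estimate that allows passage of second derivatives to the limit; this relies on the frozen-coefficient limit operator $F(\cdot,x_0,t_0)$ inheriting the concavity/convexity from \ref{hyp:concavity} and the uniform ellipticity from \ref{hyp:uniformellipticity}, together with \eqref{eq:paraH4} ensuring that the coefficient oscillation in $(x,t)$ of the rescaled operator vanishes as $r_j\to 0$.
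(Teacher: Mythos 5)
Your proposal follows essentially the same approach as the paper's proof: extract a locally uniformly convergent subsequence from the $C^{1,1}_x\cap C^{0,1}_t$ bound, define $\Omega(u_0)$ as the set-theoretic limit of the rescaled domains, use parabolic Evans--Krylov estimates on compact subsets of $\Omega(u_0)$ to upgrade to $C^2_x\cap C^1_t$ convergence and pass to the limit in the equation, and then deduce $\{u_0\neq 0\}\subset\Omega(u_0)$ from local uniform convergence. One slip: the complement of $\Omega$ is relatively \emph{closed}, not "relatively open" as you write; also, the assertion that $\nabla_x u(x_0,t_0)=0$ "since $u\in C^1_x$ and $(x_0,t_0)\in\partial\Omega$" is not a complete argument as stated (a point of $\partial\Omega$ need not be a density point of $\Omega^c$, so vanishing of the gradient is not automatic), though the paper is equally terse on this point and implicitly relies on the same normalization used throughout.
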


\begin{proof}
By $C_x^{1,1} \cap C_t^{0,1}$ regularity of $u$ and the fact that $u=0$ on $\partial \Omega$, it follows that the sequence $\{u_{r_j}\}$ is uniformly bounded; hence, up to a subsequence, $u_j \rightarrow u_0$ locally uniformly. 
Define $\Omega(u_{0})$ to be the limit of the open sets $\Omega_{j}:=\{(x,t):(x_0+r_jx, t_0+r_j^2t)\in \Omega\}$ (as in the elliptic case), and note that $\tilde D^{2}u$ is bounded on the complement of $\Omega(u_0)$ (since $\{u\ne0\}\subset \Omega$). Moreover, $u_0$ is not identically zero by non-degeneracy. Next, let $(y,t) \in\Omega(u_{0})$ and select $\delta>0$ such that $Q_{\delta}(y,t)\subset \Omega_{j}$ for $j$ large enough; note that $u_{r_{j}}$ is $C_x^{2,\alpha} \cap C_t^{1,\alpha}$ in this set (by the parabolic Evans-Krylov theorem \cite{MR0262675}). We can therefore, without loss of generality, assume $C_x^{2} \cap C_t^{1}$ convergence of $u_{r_{j}}$ to $u_{0}$ in $Q_{\delta}(y,t)$. In particular,  
\begin{align*}
F(D^{2}u_{0}(y,t),x_{0}, t_0) & =\lim_{j\to\infty} \Big(F(D^{2}u_{r_{j}}(y,t),x_{0}+r_j y, t_0+r_j^2t) - \partial_t u_{r_{j}}(y,t) \Big)\\
 & =\lim_{j\to\infty}f(x_{0}+r_j y, t_0+r_j^2t)=f(x_{0},t_0),\qquad y\in Q_{\delta}(y,t).
\end{align*}
To conclude the proof, note that for $j$ large enough, $u_{r_{j}}\ne 0$ in a neighborhood of a point $(y,t)$ where $u_{0}(y,t)\ne0$, and so it follows that $\{u_{0}\ne0\}\subset\Omega(u_{0})$.

\end{proof}

\noindent Since blow-up solutions are solutions to a free boundary problem on $\mathbb{R}^{n+1}$, one may consider the classification of these global solutions just like in the elliptic case. By letting $\mathcal{G}(M):=\mathcal{H}(M,x_0,t_0)/f(x_0,t_0)$, the problem reduces to the content of \cite[Proposition 3.2]{2013arXiv1309.0782F}. 

\begin{prop} \label{hspara}
Fix $X_0:=(x_0,t_0)$. If $u_0$ is a solution to
\[
\begin{cases}
\mathcal{H}(D^{2}u(y),X_{0})=f(X_{0}) & \text{a.e. in }\Omega(u_{0}),\\
|D^{2}u|\le K & \text{a.e. in }\mathbb{R}^{n}\backslash\Omega(u_{0}),
\end{cases}
\]
with $\{u_{0} \ne0\}\subset\Omega(u_{0})$, and there exists $\epsilon_{0}>0$ such that
\[
\delta_{r}(u,x)\ge\epsilon_{0},\qquad\forall r>0,\,\forall x\in\partial\Omega(u_0),
\]
then $u_{0}$ is time-independent and of the form $u_{0}(x)=\gamma_{X_{0}}[(x\cdot e_{X_{0}})^{+}]^{2}/2$,
where $e_{X_{0}}\in\mathbb{S}^n$ and $\gamma_{X_{0}}\in(1/\lambda_{1},1/\lambda_{0})$
are such that $F(\gamma_{X_{0}}e_{X_{0}}\otimes e_{X_{0}}, X_{0})=f(X_{0})$.
\end{prop}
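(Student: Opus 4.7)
The plan is to reduce the parabolic classification to the elliptic one (Proposition \ref{hs}) by showing that $u_0$ is time-independent; once this is established, the stated form follows by invoking the elliptic result on the frozen-coefficient operator.

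First, I would freeze and normalize. Set $\mathcal{G}(M) := F(M,X_0)/f(X_0)$, which is uniformly elliptic, convex, independent of the space-time variable, and satisfies \ref{hyp:0@0}, \ref{hyp:uniformellipticity} together with \ref{eq:H4} vacuously. After dividing $u_0$ by $f(X_0)$ (still denoted $u_0$ by abuse of notation), the equation becomes
\[
\mathcal{G}(D^{2}u_0) - \partial_t u_0 = 1 \quad\text{a.e.\ in } \Omega(u_0), \qquad |\tilde D^{2} u_0|\le K/f(X_0) \quad\text{a.e.\ outside.}
\]
Applying Theorem \ref{thm:pararegularity} on cylinders of arbitrary size and sending the scale to infinity yields a universal bound on $|\tilde D^{2}u_0|$ on all of $\mathbb{R}^{n+1}$, with constants depending only on $n$, $\lambda_0$, $\lambda_1$, $K$, and $f(X_0)$.

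The core step is to show $\partial_t u_0 \equiv 0$. Set $w:=\partial_t u_0$. Inside $\Omega(u_0)$ the parabolic Evans--Krylov theorem \cite{MR0262675} (as already used in Lemma \ref{lem:blowuppara}) yields $u_0\in C_x^{2,\alpha}\cap C_t^{1,\alpha}$, so $w$ is a bounded viscosity (in fact classical) solution of a linear, uniformly parabolic equation obtained by formally differentiating the frozen equation in $t$. Since $\{u_0\ne 0\}\subset\Omega(u_0)$, we have $u_0\equiv 0$ on $\partial\Omega(u_0)$, and then the $C_t^{0,1}$ regularity forces $w=0$ on $\partial\Omega(u_0)$ in the appropriate sense. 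The uniform thickness assumption $\delta_r(u_0,X)\ge\epsilon_0$ supplies a quantitative measure-density on the coincidence set at every scale and at every free boundary point, which is exactly what is needed to run a parabolic Liouville / ABP argument on the bounded function $w$ in the unbounded domain $\Omega(u_0)$. This is the same mechanism as in \cite[Proposition 3.2]{2013arXiv1309.0782F} and forces $w\equiv 0$.

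Once time-independence is in hand, $u_0$ depends only on the spatial variable and solves
\[
\mathcal{G}(D^{2}u_0) = 1 \quad\text{a.e.\ in }\Omega(u_0), \qquad |D^{2}u_0|\le K/f(X_0) \quad\text{a.e.\ outside,}
\]
with $\{u_0\ne 0\}\subset\Omega(u_0)$ and the thickness condition inherited (it was defined precisely via spatial projections at fixed times). Proposition \ref{hs} applied with the frozen operator $\mathcal{G}$ immediately gives the desired half-space form $u_0(x)=\gamma_{X_0}[(x\cdot e_{X_0})^+]^2/2$, with $e_{X_0}\in \mathbb{S}^n$ and $\gamma_{X_0}\in(1/\lambda_1,1/\lambda_0)$ determined by $F(\gamma_{X_0}e_{X_0}\otimes e_{X_0},X_0)=f(X_0)$.

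The main obstacle is establishing time-independence: the equation for $w=\partial_t u_0$ lives on the a priori non-smooth set $\Omega(u_0)$, so standard parabolic maximum-principle arguments do not apply directly. The role of the uniform thickness $\delta_r(u_0,X)\ge\epsilon_0$ is exactly to compensate for this lack of boundary regularity through a parabolic ABP / strong maximum principle applied at every scale; this is the delicate step and the one that borrows most heavily from the framework of \cite{2013arXiv1309.0782F}. Everything else is a routine transcription of the elliptic argument.
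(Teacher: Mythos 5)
Your overall route -- freeze and normalize to $\mathcal{G}(M):=F(M,X_0)/f(X_0)$, prove time-independence of the global solution, then invoke the elliptic classification (Proposition \ref{hs}) -- is the same reduction the paper performs, which in the text is compressed to the single sentence ``By letting $\mathcal{G}(M):=\mathcal{H}(M,x_0,t_0)/f(x_0,t_0)$, the problem reduces to the content of \cite[Proposition 3.2]{2013arXiv1309.0782F}.'' So the top-level strategy matches; the paper simply outsources the hard part (time-independence and the parabolic half-space classification) to Figalli--Shahgholian, and you do essentially the same while attempting to sketch the internal mechanism.

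There is, however, a genuine gap in the step you present as almost automatic. You claim that since $u_0\equiv 0$ on $\partial\Omega(u_0)$ and $u_0\in C_t^{0,1}$, it follows that $w:=\partial_t u_0=0$ on $\partial\Omega(u_0)$ ``in the appropriate sense,'' and you then propose to run a parabolic ABP/Liouville argument for $w$ with this as boundary data. This is not a consequence of $C_t^{0,1}$ regularity: Lipschitz-in-time regularity gives only $w\in L^\infty$, and $\partial\Omega(u_0)$ has measure zero, so there is no pointwise value of $w$ there to speak of; nor is there any a priori continuity of $\partial_t u_0$ up to the free boundary. In fact, the paper treats the statement that $\partial_t u$ vanishes along the free boundary (the lemma immediately after Proposition \ref{hspara}) as a \emph{consequence} of Proposition \ref{hspara}, not an ingredient of its proof -- so if one tried to carry out your sketch literally, one would have to worry about a circularity. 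The correct mechanism in the cited source is more delicate (quadratic blow-down, dichotomy, and thickness combine to force the coincidence set to be a spatial half-space cylinder at every time, from which time-independence follows), and your deferral to \cite{2013arXiv1309.0782F} for ``the delicate step'' is in the end what saves the argument. In short: the approach is the paper's approach, but the stated justification that $w$ vanishes on $\partial\Omega(u_0)$ is not a valid shortcut, and a complete proof must go through the actual argument of the reference rather than a parabolic maximum principle with this purported boundary condition.
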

This proposition can, in turn, be used to prove that the time derivative $\partial_t u$ vanishes on the free boundary. The proof follows the same line as \cite{2013arXiv1309.0782F} except that Proposition 3.2 is replaced in their proof with Proposition \ref{hspara}. The result is stated in the following lemma.
\begin{lem}
	Let $u$, $f$, $F$ and $\delta_r$ be as in Theorem \ref{thm:paraboundaryregularity} and $\{u\ne 0\}\subset\Omega$. Then
	\[
		\lim_{\Omega\ni X\to \partial\Omega}\partial_t u(X)=0
	\]
\end{lem}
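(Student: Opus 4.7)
The plan is to argue by contradiction: assume there exist $X_k = (x_k, t_k) \in \Omega$ and $\delta > 0$ with $X_k \to X^0 \in \partial\Omega$ and $|\partial_t u(X_k)| \geq \delta$ for all $k$. Let $r_k$ be the parabolic distance from $X_k$ to $\partial\Omega$ and pick $Y_k \in \partial\Omega$ realizing this distance; then $r_k \to 0$, and $\{u \ne 0\} \subset \Omega$ together with $Y_k \notin \Omega$ forces $u(Y_k) = 0$. Form the rescaling
\begin{equation*}
v_k(y,s) := \frac{u(Y_k + (r_k y, r_k^2 s))}{r_k^2},
\end{equation*}
and by Theorem \ref{thm:pararegularity} together with an Arzel\`a--Ascoli argument extract a subsequence converging locally uniformly to a blow-up $v_0$, exactly as in Lemma \ref{lem:blowuppara}. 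Since $Y_k \to X^0$, the limit $v_0$ solves the frozen-coefficient problem $\mathcal{H}(v_0, X^0) = f(X^0)$ in $\Omega(v_0)$ with $\{v_0 \ne 0\} \subset \Omega(v_0)$, and the scale-invariance of $\delta_r$ transfers the thickness hypothesis to $\partial\Omega(v_0)$. Proposition \ref{hspara} then forces $v_0$ to be a time-independent half-space solution $v_0(y,s) = \gamma[(y\cdot e)^+]^2/2$.

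Writing $X_k = Y_k + (r_k y_k, r_k^2 s_k)$, the point $(y_k, s_k)$ has parabolic norm one; pass to a further subsequence so that $(y_k, s_k) \to (y_0, s_0)$, again of parabolic norm one. The geometric input now enters: because $Y_k$ minimizes the parabolic distance from $X_k$ to $\partial\Omega$, the cylinder $Q_{1/2}(y_k, s_k)$ lies entirely in the scaled domain $\{(z,\tau) : Y_k + (r_k z, r_k^2 \tau) \in \Omega\}$, and the limsup-based definition of $\Omega(v_0)$ propagates this to the inclusion $Q_{1/2}(y_0, s_0) \subset \Omega(v_0)$. Hence $(y_0, s_0)$ is an interior point of the positivity set of the half-space solution $v_0$.

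On the cylinder $Q_{1/2}(y_0, s_0)$, parabolic Evans--Krylov \cite{MR0262675} combined with the H\"older spatial dependence of $F$ gives uniform $C^{2,\beta}_x \cap C^{1,\beta}_t$ bounds on the $v_k$, so the convergence $v_k \to v_0$ upgrades to $C^2_x \cap C^1_t$ on that cylinder. A direct computation shows $\partial_s v_k(y_k, s_k) = \partial_t u(X_k)$, which yields
\begin{equation*}
\delta \le |\partial_t u(X_k)| = |\partial_s v_k(y_k, s_k)| \longrightarrow |\partial_s v_0(y_0, s_0)| = 0
\end{equation*}
since $v_0$ is time-independent, a contradiction. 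The main obstacle is verifying that $(y_0, s_0)$ lands strictly inside $\Omega(v_0)$ rather than on the free-boundary hyperplane $\{y \cdot e = 0\}$ of the half-space solution; this is what allows the boost from locally uniform convergence to $C^1_t$ convergence, and it is secured precisely by the choice of $Y_k$ as the nearest free boundary point in the parabolic metric.
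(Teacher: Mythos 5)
Your blow-up--at--the--nearest--free-boundary-point scheme is the right approach and, as far as one can reconstruct it, is the same as the paper's (the paper defers to the proof in \cite{2013arXiv1309.0782F}, which is exactly this argument with the classification of blow-ups replaced by Proposition~\ref{hspara}). The contradiction via $\partial_s v_k(y_k,s_k)=\partial_t u(X_k)$, the use of Proposition~\ref{hspara} to force time-independence of the blow-up, and the interior Evans--Krylov upgrade are all as intended.

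There is, however, one genuine slip in the geometric step. You write that $Q_{1/2}(y_k,s_k)$ lies in the rescaled domain and that passing to the limit gives $Q_{1/2}(y_0,s_0)\subset\Omega(v_0)$, hence $(y_0,s_0)$ is interior. But in this paper's notation $Q_r(x,t)=B_r(x)\times(t-r^2,t)$ is a \emph{backward} cylinder, so $(y_0,s_0)$ sits on the top boundary of $Q_{1/2}(y_0,s_0)$; the inclusion $Q_{1/2}(y_0,s_0)\subset\Omega(v_0)$ therefore does not by itself make $(y_0,s_0)$ an interior point, nor does it give a neighborhood on which to run the $C^1_t$ upgrade and conclude $\partial_s v_k(y_k,s_k)\to\partial_s v_0(y_0,s_0)$. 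The correct observation is stronger and comes for free from the nearest-point choice: since no point of $\partial\Omega$ lies within parabolic distance $r_k$ of $X_k$, the rescaled domain contains the full \emph{two-sided} cylinder $B_1(y_k)\times(s_k-1,s_k+1)$ (not merely its backward half), and this does pass to the limit to produce a genuine space-time neighborhood of $(y_0,s_0)$ inside $\Omega(v_0)$. Also note that one needs $\Omega(v_0)\subset\{y\cdot e>0\}$ so that $(y_0,s_0)$ lands in the positivity set; this follows from $F(0,X_0)=0\neq f(X_0)$, which prevents the open set $\Omega(v_0)$ from meeting $\{v_0=0\}$. With these two points made explicit, the proof is complete.
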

The parabolic counterpart of Lemma \ref{gain} follows by replacing $w$ given in that proof with
\[
C\partial_e u(X)-u(X)+\tilde c\frac{|x-x_0|^2-(t-t_0)}{2n\lambda_1+1},
\]
where $\tilde c:=\inf_{Q_{1}}f/2$; this is where the Lipschitz assumptions on $F$ and $f$ come into play. With this in mind, the proof of Theorem \ref{thm:paraboundaryregularity} follows as in the elliptic case. 
\bibliographystyle{amsalpha}
\bibliography{References}

\signei

\signam

\end{document}